\newtheorem{thm}{Theorem}[section]
\newtheorem{cor}[thm]{Corollary}
\newtheorem{lem}[thm]{Lemma}
\newtheorem{prop}[thm]{Proposition}
\newtheorem{defn}[thm]{Definition}
\numberwithin{equation}{section}
\renewcommand{\thefootnote}
\newcommand\co{\operatorname{co}}
\renewcommand\Re{\operatorname{Re}}
\author { B\'echir Amri$^*$ and Amel Hammi$^{**}$  }
\title{   Semigroup and Riesz transform   for the Dunkl- Schr\"{o}dinger operators }
\date{ }
\begin{document}
 \maketitle
\begin{center}
$^*$Taibah University, College of Sciences, Department of Mathematics, P. O. BOX 30002, Al Madinah AL Munawarah, Saudi Arabia.\\
\textbf{ e-mail:} bechiramri69@gmail.com\\
  $^{**}$Universit\'{e} Tunis El Manar, Facult\'{e} des sciences de Tunis,\\ Laboratoire d'Analyse Math\'{e}matique
       et Applications,\\ LR11ES11, 2092 El Manar I, Tunisie.\\
     \textbf{ e-mail:}
   hammiamel097@gmail.com
\end{center}
 \begin{abstract}
 Let  $L_k=-\Delta_k+V$ be the Dunk- Schr\"{o}dinger operators, where  $\Delta_k=\sum_{j=1}^dT_j^2$ is the Dunkl Laplace operator associated to the dunkl operators $T_j$  on $\mathbb{R}^d$ and   $V$ is a nonnegative potential function.
  In the first part of  this paper  we introduce the  Riesz transform   $R_j= T_j L_k^{-1/2}$  as an $L^2$- bounded operator  and we prove  that is
   of weak type $(1,1)$ and then is  bounded  on $L^p(\mathbb{R}^d,d\mu_k(x))$ for $1<p\leq 2$. The second pat is devoted to
  the $L^p$ smoothing of the semigroup generated by   $L_k$, when $V$ belongs to the standard  Koto class.
\\ \\
\\  \textbf{ Keywords}. Self-adjoint operator, Schr\"{o}dinger operator, Dunkl operators.
\\\textbf{ Mathematics Subject Classification }. Primary 47B25; 35J10. Secondary 	43A32.
 \end{abstract}
\section{Introduction and backgrounds   }
Nowadays the Dunkl  analysis on $\mathbb{R}^d$    becomes more and more extensive  to include topics  in the  classical Fourier analysis.
The introduction of the Dunkl operators and Dunkl-Fourier transform are  fundamental tools for generalization of a  classical known  results.
In Dunkl analysis the  doubling condition on the underlying measure is   satisfied \cite{B3,AJD} and so, it is a  convenient setting for
developing Calder\`{o}n-Zygmund Theory \cite{B1,B2,B3,JD1}.
  In this paper we consider Schr\"{o}dinger operators $L_k=-\Delta_k+V$ associated to the Dunkl Laplace operator on $\mathbb{R}^d$ given by
$\Delta_k=\sum_{j=1}^{^d}T_j^2$
 where $T_j$ are  a family of differential-difference operators associated to a finite reflection group, which are called Dunkl
operators.
 When $V$ is a nonnegative locally integrable  function the operator $L_k$ is essentially self-adjoint and its closure generates a semigroup of self-adjoint linear contractions $W_t= e^{-tL_k}$, $t>0$ (see \cite[Th 3.1]{BH} ). The  kernels $W_t(x,y) $ of this semigroup   possess the  Gaussian upper bounds (  see for instances \cite{AJD, JD1, JD3}) which are very useful ingredients  and  actually was the main motivation for    the study  of the $L^p$- boundedness of  the generalized  Riesz transform  $R_j=  T_j L_k^{-1/2}$, $j=1,...,d$ and the $L^p-L^q$ regularities of $W_t$, $t>0$.
 First, we establish
    that    $R_j$  is  of weak type $(1,1)$ and then is  bounded  on $L^p(\mathbb{R}^d,w_k(x))$ for $1<p\leq 2$. The method  which  inspired in part by the ideas in \cite{CD} consists in applying the usual  Calder\'{o}n-Zygmund decomposition and  a weighted $L^2$- estimates for
  the kernel $T_jW_t(.,y)$. ). In   \cite{Sim1} the smoothing problem on $L^p$ for the classical  Schrodinger semigroups   was studied  using  analytical method. We will investigate this technique to  prove that  if $V$ is a non-negative potential, belongs to the standard  Koto class then    $W_t$  maps $L^p(\mathbb{R}^d,d\mu_k)$    into  $L^q(\mathbb{R}^d,d\mu_k)$ for all $1\leq p\leq q\leq \infty$.

 \par We start by recalling some backgrounds from Dunkl's analysis. For details, we refer to \cite{J1,D1, D2, R2,XT} and the references cited there.
\par Consider the Euclidian space  $\mathbb{R}^d$  equipped with the canonical basis  $(e_1,e_2,...,e_d)$ and the  scalar product $\langle x,y\rangle=\sum_{j=1}^dx_jy_j$
 with associated   norm $|x|=\langle x, x\rangle^{1/2}$.
   Let $G\!\subset\!\text{O}(\mathbb{R}^d)$
be a finite reflection group associated to a reduced root system $R$
and $k:R\rightarrow[0,+\infty)$ be  a $G$--invariant function
(called multiplicity function).
Let $R^+$ be a positive root subsystem. The Dunkl operators \,$T_\xi$ on $\mathbb{R}^d$ are
the following $k$--de\-for\-ma\-tions of directional derivatives $\partial_\xi$
by difference operators\,:
 \begin{equation}\label{DO}
T_\xi f(x)=\partial_\xi f(x)
+\sum_{\,\alpha\in R^+}\!k(\alpha)\,\langle\alpha,\xi\rangle\,
\frac{f(x)-f(\sigma_\alpha.\,x)}{\langle\alpha,\,x\rangle}\,,
 \end{equation}
where  $\sigma_\alpha $ denotes the reflection
with respect to the hyperplane orthogonal to $\alpha$. It's given by
\begin{equation}\label{sig}
  \sigma_\alpha(x)=x-\langle x,\alpha\rangle\alpha.
\end{equation}
Here we assume further that $|\alpha|^2=2$ for all $\alpha\in R$.
\par The operators $\partial_\xi$ and $T_\xi$
are intertwined by a Laplace--type operator
\begin{eqnarray*}\label{vk}
V_k\hspace{-.25mm}f(x)\,
=\int_{\mathbb{R}^d}\hspace{-1mm}f(y)\,d\nu_x(y),
\end{eqnarray*}
associated to a family of compactly supported probability measures
\,$\{\,\nu_x\,|\,x\!\in\!\mathbb{R}^d\hspace{.25mm}\}$\,.
Specifically, \,$\nu_x$ is supported in the the convex hull $\co(G.x)$
  and  satisfies
 \begin{equation}\label{nu}
    \nu_{rx}(B)=\nu_x(r^{-1}B), \qquad \nu_{g.x}(B)=\nu_x(g^{-1}.B)
 \end{equation}
  for each $r > 0$, $g \in G$ and each Borel set $ B \subset \mathbb{R}^d$.

\par The Dunkl operators are antisymmetric
with respect to the measure $d\mu_k(x)=w_k(x)\,dx$
with
$$
w_k(x)=\,\prod_{\,\alpha\in R^+}|\,\langle\alpha,x\rangle\,|^{\,2\,k(\alpha)}\,.
$$
It has been shown in \cite{AJD} that  there exist $c,C>0$ such that
 for $x\in \mathbb{R}^d$ and $r>0$,
 \begin{equation}\label{888}
 c r^d\prod_{\alpha\in R}(|\langle x,\alpha\rangle|+r)^{k(\alpha)}\leq \mu_k(B(x,r))\leq  C r^d\prod_{\alpha\in R}(|\langle x,\alpha\rangle|+r)^{k(\alpha)}
 \end{equation}

and for  $0<r< R$,
  \begin{equation}\label{Rr}
     C^{-1}\left(\frac{R}{r}\right)^{d+2\gamma_k}\leq\frac{\mu_k(B(x,R))}{\mu_k(B(x,r))}\leq C\left(\frac{R}{r}\right)^{d+2\gamma_k}
\end{equation}
where  $B(x,r)$ denotes the  Euclidean ball centered at $x$ with radius $ r$ and $\gamma_k$ is given by
$$\gamma_k=\sum_{\alpha\in R^+}k(\alpha).$$
 This implies that
      $\mu_k$ is a doubling measure that is for some constant $C>0$
  $$\mu_k(B(x,2r))\leq C\; \mu_k(B(x,r)) ,\quad \text{ for} \; x\in \mathbb{R}^d\;  \text{and}\; r>0.$$
     \par For every $y\!\in\!\mathbb{C}^d$\!,
the simultaneous eigenfunction problem
\begin{equation*}
T_\xi f=\langle y,\xi\rangle\,f
\qquad\forall\;\xi\!\in\!\mathbb{R}^d
\end{equation*}
has a unique solution $f(x)\!=\!E_k(x,y)$
such that $E_k(0,y)\!=\!1$, called the Dunkl kernel and is given by
\begin{equation}\label{EV}
E_k(x,y)\,
=\,V(e^{\,\langle\;.,y\;\rangle})(x)\,
=\int_{\mathbb{R}^d}\hspace{-1mm}e^{\,\langle \eta ,y\rangle}\,d\nu_x(\eta )
\qquad\forall\;x\!\in\!\mathbb{R}^d.
\end{equation}
Furthermore this kernel has a holomorphic extension to $\mathbb{C}^d\times \mathbb{C}^d $
 and the following   hold\,: for  $ \;x, \;y\!\in\!\mathbb{C}^d,$
\begin{itemize}
\item[(ii)] $E_k(x,y)=E_k(y,x)$,
\item[(iii)] $E_k(\lambda x,y)=E_k(x,\lambda  y)$, for $\lambda\in \mathbb{C}$
\item[(iv)] $E_k(g. x,g.y)=E_k(x, y)$, for $g\in G$.
\end{itemize}
\par  The Dunkl transform  is defined on $L^1(\mathbb{R}^d\!,d\mu_k)$ by
$$
\mathcal{F}_kf(\xi)= \frac{1}{c_k}\;
\int_{\mathbb{R}^d}\!f(x)\,E_k(x,-i\,\xi) d\mu_k(x)
$$
where
$$
c_k\,=\int_{\mathbb{R}^d}\!e^{-\frac{|x|^2}2} d\mu_k(x).
$$
 Dunkl transform is a generalization of the Fourier transform $(k=0)$
and  satisfies the following properties:
\begin{itemize}
\item[(i)]
$\mathcal{F}_k$ is a topological automorphism
of  $\mathcal{S}(\mathbb{R}^d)$,  the Schwartz space of rapidly decreasing
functions on $\mathbb{R}^d$.
\item[(ii)]
(\textit{Plancherel Theorem\/})
$\mathcal{F}_k$  extends to
an isometric automorphism of $L^2(\mathbb{R}^d\!,d\mu_k)$.
\item[(iii)] (Parseval's formula). For all $f, g \in L^2(\mathbb{R}^d,d\mu_k)$
 we have
$$\int_{\mathbb{R}^d}f(x)\overline{g(x)}d\mu_k(x)=\int_{\mathbb{R}^d}\mathcal{F}_k(f)(x)\overline{\mathcal{F}_k(g) (x)}d\mu_k(x)$$
\item[(vi)]
(\textit{Inversion formula\/})
For every $f\!\in\!\mathcal{S}(\mathbb{R}^d)$,
and more generally for every $f\!\in\!L^1(\mathbb{R}^d\!,d\mu_k)$
such that $\mathcal{F}_kf\!\in\!L^1(\mathbb{R}^d\!, d\mu_k)$,
we have
$$
f(x)=\mathcal{F}_k^2\!f(-x)\qquad\forall\;x\!\in\!\mathbb{R}^d.
$$
\end{itemize}
\par Let $x\in \mathbb{R}^d$, the
Dunkl translation operator $\tau_x$ is given for $f\in
L^2_k(\mathbb{R}^d,d\mu_k)$ by
\begin{eqnarray*}\label{dutr}
\mathcal{F}_k(\tau_x(f))(y)= \mathcal{F}_kf(y)\,E_k(x,iy), \quad
y\in\mathbb{R}^d.
\end{eqnarray*}
 In the case when $f(x)=\widetilde{f}(|x|)$ is a radial function in  $  \mathcal{S}(\mathbb{R}^d)$,  the Dunkl translation is represented by the following integral
\begin{eqnarray}\label{trad}
\tau_x(f)(y)=
\int_{\mathbb{R}^{n}}\widetilde{f}( \sqrt{|y|^2+|x|^2+2<y,\eta>}\;)\;d\nu_x(\eta).
 \end{eqnarray}

\par We define  the Dunkl convolution product   for suitable functions $f$ and $g$ by
$$f*_kg(x)=\int_{\mathbb{R}^d} \tau_x(f)(-y)g(y)d\mu_k(y),\quad x\in\mathbb{R}^d.$$
We note that it is commutative and satisfies,
\begin{eqnarray*}\label{conv}
 \mathcal{F}_k(f*_kg)=\mathcal{F}_k(f)\mathcal{F}_k(g), \quad \quad f,\;g\in L^2(\mathbb{R}^d,d\mu_k).
\end{eqnarray*}

\par  In  what follows  present our main tools  for Dunkl Scr\"{o}dinger  semigroup. We refer to \cite{BH} for further details of some facts.
  \par The Dunkl Laplacian operator is given  by
 $$\Delta_k=\sum_{j=1}^dT_{j}^2,$$
 where $T_j=T_{e_j}$.  We consider   $-\Delta_k$ as a densely defined   operator on the Hilbert space $L^2(\mathbb{R}^d,d\mu_k)$  which is symmetric  and positive. We denote by $A_k$   the unique positive self-adjoint  extension   of $-\Delta_k$,  defined by
  $$D(A_k)=H_k^2(\mathbb{R}^d)=\{f\in L^2(\mathbb{R}^d,d\mu_k);\; |x|^2\mathcal{F}_k(f)\in L^2(\mathbb{R}^d,d\mu_k)\}$$
  $$ A_k(f) =\mathcal{F}_k^{-1}(|x|^2\mathcal{F}_k(f)); \qquad f \in D(A_k).$$
The operator $A_k$ is generator of a strongly continuous one parameter semi group $(e^{-tA_k})_{t\geq 0}$
where
$$e^{-tA_k}f=  \mathcal{F}_k^{-1} (e^{-t|.|^2} \mathcal{F}_k(f)), \qquad
 f\in L^2(\mathbb{R}^d,d\mu_k).$$
  It follows that  $e^{-tA_k}$ is an integral operator given by
\begin{equation}\label{1}
   e^{-tA_k}f(x)==k_t*_kf =\int_{\mathbb{R}^d}K_t(x,y)f(y)w_k(y)dy
\end{equation}
with
\begin{equation*}\label{kk}
 k_t(x)= \mathcal{F}_k^{-1}(e^{-t|.|^2})(x) = \frac{1}{c_kt^{\gamma_k+d/2}}\;e^{-|x|^2/4t}
\end{equation*}
and
\begin{equation*}\label{KK}
 K_t(x,y)=\tau_x(k_t)(-y)= \frac{1}{c_kt^{\gamma_k+d/2}}\;e^{- (|x|^2+|y|^2)/4t}E_k(x/2t,y).
\end{equation*}
We call it the Dunkl heat kernel. We have a bound on  $K_t$ of the form (see \cite{AJD})
\begin{equation}\label{dhk}
  0<K_t(x,y)\leq C\frac{e^{-c |x^+-y^+|^2/t }}{\max(\mu_k(B(x, \sqrt{t})),\mu_k(B(y,\sqrt{t})))}
  \end{equation}
  where $x^+$
  is the unique element of $G.x$ that contained in the closed   fundamental Weyl chamber $C^+=\{x\in \mathbb{R}^d;\; \langle x,\alpha\rangle\geq 0\}$.
 Noting here that
\begin{equation}\label{12}
  |x^+-y^+|= \min_{g\in G}|g.x-y|.
\end{equation}
From  (\ref{888}),   the heat kernel  $K_t$ satisfies
\begin{equation}\label{dhhk}
  K_t(x,y)\leq C t^{-d/2}   \leq  \frac{e^{-c  |x^+-y^+|^2/t}}{ \max( w_k(x),w_k(y))}
  \end{equation}
  and
\begin{equation}\label{dhhhk}
  K_t(x,y)\leq C t^{-d/2-\gamma_k}\;e^{-c  |x^+-y^+|^2/t}
  \end{equation}
 \par Let $V$ be a nonnegative measurable function on $\mathbb{R}^d$ that is finite almost everywhere.
 In   the Hilbert space $L^2(\mathbb{R}^d,d\mu_k)$ we consider the  operator
 $$\mathcal{L}_k=A_k+V$$
  with domain   $D(\mathcal{L}_k)=H_k^2(\mathbb{R}^d)\cap D(V)$ where
 $$D(V)=\{ f\in L^2(\mathbb{R}^d,d\mu_k);\; Vf\in L^2(\mathbb{R}^d,d\mu_k)\}.$$
 We call this operator the Dunkl Schr\"{o}dinger operator.   Define the quadratic form $q_k$ by
 \begin{eqnarray*}
 D(q_k)&=&\{f\in L^2(\mathbb{R}^d,d\mu_k);\;\left(\sum_{j=1}^n|T_jf|^2\right)^{1/2},   V^{1/2}f \in  L^2(\mathbb{R}^d,d\mu_k) \}\\
 q_k(f)&=&\sum_{j=1}^n\|T_jf\|_{2,k}^2+\|V^{1/2}f\|^2_{2,k}.
\end{eqnarray*}
The quadratic form  $q_k$    is densely defined   and  closed, then   there exists  a unique positive self adjoint operator $L_k$ such that:
    $$q_k(\varphi)=\langle L_k(\varphi),\varphi \rangle,\qquad \varphi \in D(q_k),$$
    Moreover,
 \begin{equation}\label{l2}
 D(q_k)=D(L_k^{1/2})\quad\text{and}\quad q_k(\varphi)=\|L_k^{1/2}(\varphi)\|_{2,k}.
\end{equation}
Noting here that $L_k^{1/2}$ is the  unique positive self-adjoint operator  such that
$(L_k^{1/2})^2=L_k$.
 \par When  $V\in L^2_{loc}(\mathbb{R}^d,d\mu_k) $ and $V\geq0$, we have the following:
  \begin{itemize}
    \item[(i)]    $\mathcal{L}_k$ is essentially self-adjoint on $C^\infty_0(\mathbb{R}^d)$
and its closure  is $L_k$.
     \item[(ii)] For $u\in L^2(\mathbb{R}^d,d\mu_k)$  we have

    \begin{equation}\label{LA}
   | e^{-tL_k}(u)|\leq e^{-t\mathcal{A}_k}(|u|).
  \end{equation}
\item[(iii)]  $ e^{-tL_k}$, $t>0$ is an integral operator,
  and its   kernel $W_t$ satisfies
\begin{equation}\label{pettis}
  0\leq W_t(x,y) \leq  K_t(x,y).\end{equation}
  \end{itemize}

  \section{Riesz transforms of Dunkl-Schr\"{o}dinger operators}
 In this section we assume that $V\geq 0$ and $V\in L^2_{loc}(\mathbb{R}^d,d\mu_k)$. Let us begin by considering the following observation.
 \par From (\ref{l2})  we see  that for $j=1,...,d$
 \begin{equation}\label{l3}
   \|\xi_j\mathcal{F}_k(\varphi)\|_{2,k}=\|T_j\varphi\|_{2,k}\leq \|q_k(\varphi)\|_{2,k}=\|L_k^{1/2}\varphi\|_{2,k}; \quad \varphi \in D(L_k^{1/2})
 \end{equation}
 which implies  that $ker(L_k^{1/2}) = \{0\}$. But since $Ran(L_k^{1/2})^{\perp}=ker(L_k^{1/2})$ then one has
$$\overline{Ran(L_k^{1/2})}= L^2(\mathbb{R}^d,d\mu_k).$$
Let $L_k^{-1/2}$ the inverse operator of $L_k^{1/2}$ defined on $Ran(L_k^{1/2})$. It follows from  (\ref{l3}) that
$$\|T_jL_k^{-1/2}u\|_{2,k}\leq \|u\|_{2,k}; \quad u \in Ran(L_k^{1/2})$$
 Thus   $T_jL_k^{-1/2}$, $j=1,...,d$, can be extended ( by density)  to a bounded operator on  $L^2(\mathbb{R}^d,d\mu_k)$, we call it  Riesz transform of
  the Dunkl- schrodinger operator $L_k$ and   will be  denoted by  $R_j$.
  \par Now  from functional calculus  for positive self-adjoint operator  one can write
  \begin{equation}\label{1/2}
    L_k^{-1/2}u=\frac{1}{\sqrt{\pi}}\int_0^\infty e^{-sL_k}u \; \frac{ds}{\sqrt{s}}.
\end{equation}
 and  the   Riesz transform $R_j$ can represented  by
 $$R_j=T_jL_k^{-1/2}=\frac{1}{\sqrt{\pi}}\int_0^\infty T_j\;e^{-tL_k}\frac{dt}{\sqrt{t}}.$$
 Our main result is the following.
 \begin{thm}\label{999}
  The Riesz transform  $R_j$, j=1,...,d, is
of weak type $(1, 1)$ and then  is bounded on  $L^p(\mathbb{R}^d,d\mu_k)$, $1<p\leq 2$.
\end{thm}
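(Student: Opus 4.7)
The strategy is to adapt the Coulhon--Duong approach from \cite{CD} to the Dunkl setting. Since $R_j$ is bounded on $L^2(\mathbb{R}^d,d\mu_k)$ by construction and since $\mu_k$ is a doubling measure by \eqref{Rr}, the triple $(\mathbb{R}^d,|\cdot|,d\mu_k)$ is a space of homogeneous type and the Marcinkiewicz interpolation theorem reduces the problem to the weak $(1,1)$ inequality. For $f\in L^1\cap L^2(\mathbb{R}^d,d\mu_k)$ and $\lambda>0$ I would carry out the standard Calder\'on--Zygmund decomposition $f=g+\sum_i b_i$ in this space, with $\|g\|_\infty\leq C\lambda$, $\supp b_i\subset B_i=B(y_i,r_i)$, $\int b_i\,d\mu_k=0$, $\|b_i\|_{1,k}\leq C\lambda\,\mu_k(B_i)$, and $\sum_i\mu_k(B_i)\leq C\|f\|_{1,k}/\lambda$.

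The contribution of $g$ is handled by the $L^2$ boundedness of $R_j$, together with $\|g\|_{2,k}^2\leq C\lambda\|f\|_{1,k}$, giving $\mu_k\{|R_j g|>\lambda/2\}\leq C\|f\|_{1,k}/\lambda$ via Chebyshev. For the bad part, set $\Omega^*=\bigcup_i B(y_i,2r_i)$, so that $\mu_k(\Omega^*)\leq C\|f\|_{1,k}/\lambda$ by doubling, and it remains to prove the $L^1$ bound $\sum_i\int_{\mathbb{R}^d\setminus\Omega^*}|R_j b_i|\,d\mu_k\leq C\|f\|_{1,k}$. Using the subordination \eqref{1/2} and writing $t_i=r_i^2$, split
$$R_j b_i=\frac{1}{\sqrt\pi}\int_0^{t_i}T_j e^{-tL_k}b_i\,\frac{dt}{\sqrt t}+\frac{1}{\sqrt\pi}\int_{t_i}^\infty T_j e^{-tL_k}b_i\,\frac{dt}{\sqrt t}.$$
On the long-time piece $t>t_i$ I would use $\int b_i\,d\mu_k=0$ to replace the kernel of $T_je^{-tL_k}$ by the difference $T_j W_t(x,y)-T_j W_t(x,y_i)$ and apply a $y$-regularity estimate of order $r_i/\sqrt t$ with Gaussian decay in $|x-y_i|^2/t$. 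On the short-time piece $t\leq t_i$ the required input is a weighted $L^2$ off-diagonal estimate of Gaffney type,
$$\int_{\mathbb{R}^d}|T_j W_t(x,y)|^2 e^{\,c|x-y|^2/t}\,d\mu_k(x)\leq \frac{C}{t\,\mu_k(B(y,\sqrt t))};$$
Cauchy--Schwarz on $\{x:|x-y_i|>2r_i\}$, comparison of $\mu_k(B(y_i,2r_i))$ with $\mu_k(B(y,\sqrt t))$ by doubling, and integration in $t\in(0,t_i]$ then give the claimed $L^1$ bound.

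The principal obstacle is the Gaffney estimate on $T_j W_t$. The quadratic form identity \eqref{l2} controls $\sum_j\|T_j u\|_{2,k}^2$ by $\|L_k^{1/2}u\|_{2,k}^2$, so the natural route is a Davies perturbation argument: differentiate $\int|W_t(x,y)|^2 e^{2\phi(x)}\,d\mu_k(x)$ in $t$ for a bounded Lipschitz weight $\phi$ with $\|\nabla\phi\|_\infty\leq\eta/\sqrt t$, use $\partial_t W_t=-L_k W_t$, and invoke the non-negativity of $V$ together with the antisymmetry of each $T_j$ with respect to $d\mu_k$ to obtain a differential inequality whose cross terms are absorbed via the Lipschitz bound on $\phi$. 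The subtlety specific to the Dunkl framework is that $T_j$ carries a non-local difference part, so the integration by parts produces contributions evaluating $\phi$ at reflected points $\sigma_\alpha\cdot x$; these are controlled using $|\phi(\sigma_\alpha x)-\phi(x)|\leq\|\nabla\phi\|_\infty|\langle\alpha,x\rangle|$ together with the finiteness of $G$ and the factor $\langle\alpha,x\rangle^{-1}$ in \eqref{DO}. Combined with the pointwise bound \eqref{dhk}, optimizing over $\phi$ yields the weighted estimate above, and then Steps~1--3 assemble into the weak $(1,1)$ inequality; Marcinkiewicz interpolation with the $L^2$ bound completes the proof for $1<p\leq 2$.
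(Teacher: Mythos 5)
Your overall frame (Calder\'on--Zygmund decomposition, $L^2$ bound for the good part, subordination formula \eqref{1/2} for the bad part) matches the paper, but there are two genuine gaps in the treatment of the bad part. First, your long-time piece $\int_{t_i}^{\infty}T_je^{-tL_k}b_i\,dt/\sqrt{t}$ is handled by the classical cancellation argument: you invoke $\int b_i\,d\mu_k=0$ together with a H\"older-in-$y$ estimate for $T_jW_t(x,\cdot)$ of order $r_i/\sqrt{t}$. No such regularity of the kernel in the second variable is available here -- the only hypotheses are $V\geq 0$, $V\in L^2_{\mathrm{loc}}$, and the paper never proves (and one cannot expect) quantitative smoothness of $T_jW_t(x,y)$ in $y$ for such rough potentials. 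Avoiding exactly this is the point of the Coulhon--Duong method the paper follows: instead of splitting the time integral, one writes $R_jb_i=R_je^{-t_iL_k}b_i+R_j(I-e^{-t_iL_k})b_i$, treats the first term globally in $L^2$ via the Gaussian bound \eqref{dhk} and the Hardy--Littlewood maximal function, and represents the second by the kernel $\int_0^{\infty}\bigl(s^{-1/2}-\mathds{1}_{\{s>t_i\}}(s-t_i)^{-1/2}\bigr)T_jW_s(x,y)\,ds$, whose time weight is integrable against the single-scale off-diagonal bound \eqref{02}; no cancellation of $b_i$ and no $y$-regularity are ever used.

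Second, your key Gaffney-type estimate is stated with the Euclidean weight $e^{c|x-y|^2/t}$, and your exceptional set is $\bigcup_iB(y_i,2r_i)$. Both are wrong in the Dunkl setting: by \eqref{dhk} the heat kernel decays only in the orbit distance $|x^+-y^+|=\min_{g\in G}|g.x-y|$, so $W_t(x,y)$ (hence $T_jW_t(x,y)$) is not small near the reflected points $\sigma_\alpha.y$, the integral $\int|T_jW_t(x,y)|^2e^{c|x-y|^2/t}d\mu_k(x)$ cannot be bounded as you claim, and one must excise the whole orbit $B_i^*=\bigcup_{g\in G}B(g.x_i,2r_i)$ of each ball. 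Moreover, a Gaussian weight cannot survive the Davies perturbation you sketch: the difference part of $T_j$ produces the quotients $\bigl(\phi(x,y)-\phi(\sigma_\alpha.x,y)\bigr)/\langle x,\alpha\rangle$, and these are dominated by the weight itself only if the weight is $G$-invariant and grows at most exponentially -- this is precisely what the paper's weight $\phi(x,y)=\int e^{\sqrt{1+A(x,y,\eta)^2}}d\nu_y^G(\eta)$ and Lemma \ref{123} are engineered for, and it is why the resulting off-diagonal decay in \eqref{02} is only $e^{-c\sqrt{t/s}}$ rather than Gaussian (which is still enough). Your short-time piece would go through once the weight and the exceptional set are corrected in this way, but as written both the weighted $L^2$ input and the long-time argument fail.
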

In the next,  we state an  estimates which are crucial for the  the proof of Theorem   \ref{999}.
Let $\phi$ be the   function
 $$\phi(x,y)= |G|^{-1}\sum_{g\in G} \int_{\mathbb{R}^d}e^{\sqrt{1+  A(x,y,\eta)^2}}\;d\nu_{g.y}(\eta)=\int_{\mathbb{R}^d}e^{\sqrt{1+  A(x,y,\eta)^2}}\;d\nu_y^G(\eta)$$
 where $d\nu_y^G$ denotes the $G-invariant$ measure,
$$d\nu_y^G= |G|^{-1}\sum_{g\in G} d\nu_{g.y}(\eta).$$
( $d\nu$  is the measure given by the formulas $(\ref{vk})$ ) and
$$ A(x,y,\eta)=\sqrt{|y|^2+|x|^2-2<x,\eta>}= \sqrt{|x-\eta|^2+|y|^2-|\eta|^2};\quad \eta \in conv(G.y).$$
 Notice that   $\phi(.,y)$ is $C^\infty$ function which is $G-invariant$.
 \begin{thm}\label{101}
 The kernel $W_t(x,y)$ satisfies the following estimates
 \begin{eqnarray}
 \int_{\mathbb{R}^d}|T_jW_t(x,y)|^2\phi(x/\sqrt{t},y/\sqrt{t})d\mu_k (x)&\leq& Ct^{-\gamma_k-d/2-1}, \label{01}
\\\int_{ |x^+-y^+|>\sqrt{t}}|T_jW_s(x,y)| d\mu_k(x) &\leq& \frac{C}{\sqrt{s}} \;e^{-c\sqrt{t/s}}  \quad j=1,...,d,\label{02}
 \end{eqnarray}
   where the constant $C > 0$ is independent of $t,s$ and $y\in \mathbb{R}^d $.
\end{thm}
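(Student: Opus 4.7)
I will prove (\ref{01}) by a Davies--Gaffney weighted energy argument and then deduce (\ref{02}) from (\ref{01}) via Cauchy--Schwarz together with a doubling volume estimate.

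Fix $t>0$ and $y\in\mathbb{R}^d$; set $u(s,x)=W_s(x,y)$ and $\Phi(x)=\phi(x/\sqrt{t},y/\sqrt{t})$. The weight $\Phi$ is smooth and $G$-invariant in $x$, so $T_j\Phi=\partial_j\Phi$; differentiating the probability average defining $\phi$ and using the compact support of $d\nu_{y/\sqrt{t}}^G$ in $\mathrm{conv}(G.(y/\sqrt{t}))$ yields the pointwise bound $|\nabla_x\Phi|\le C\,t^{-1/2}\Phi$. Using $\partial_s u=-L_k u$, the antisymmetry of $T_j$ with respect to $d\mu_k$, and the product rule $T_j(u\Phi)=(T_j u)\Phi+u\,\partial_j\Phi$, I differentiate $E(s)=\int u^2\Phi\,d\mu_k$ to get
\begin{equation*}
-\tfrac12 E'(s)=\sum_j\!\int|T_j u|^2\Phi\,d\mu_k+\sum_j\!\int u(T_j u)\,\partial_j\Phi\,d\mu_k+\!\int V u^2\Phi\,d\mu_k,
\end{equation*}
and Cauchy--Schwarz together with $V\ge 0$ and the gradient bound yields
\begin{equation*}
\sum_j\int|T_j u(s,\cdot)|^2\Phi\,d\mu_k\le -E'(s)+\tfrac{C}{t}E(s).
\end{equation*}

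Multiplying by $e^{-Cs/t}$ and integrating over $[t/2,t]$ produces a time-averaged version of (\ref{01}) controlled by $E(t/2)$. I upgrade to the pointwise-in-$t$ bound by invoking analyticity of $e^{-sL_k}$ on $L^2$, in the form $\|L_k^{1/2}e^{-rL_k}\|_{2\to 2}\lesssim r^{-1/2}$, which forces $s\mapsto\int|T_jW_s|^2\Phi\,d\mu_k$ to be comparable to its average on $[t/2,t]$. Finally $E(t/2)\lesssim t^{-\gamma_k-d/2-1}$ follows from $W_{t/2}\le K_{t/2}$, the Gaussian bound (\ref{dhk}), and the pointwise comparison $\Phi(x)\le C e^{c|x^+-y^+|/\sqrt{t}}$: the Gaussian factor in $K_{t/2}^2$ beats this exponential, and the resulting integral has the required order by the volume estimates (\ref{888})--(\ref{Rr}).

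For (\ref{02}), Cauchy--Schwarz with $\phi$ evaluated at $(x/\sqrt{s},y/\sqrt{s})$ gives
\begin{equation*}
\int_{|x^+-y^+|>\sqrt{t}}\!\!|T_jW_s(x,y)|\,d\mu_k(x)\le\Big(\!\int\!|T_jW_s|^2\phi\,d\mu_k\!\Big)^{\!1/2}\Big(\!\int_{|x^+-y^+|>\sqrt{t}}\!\!\phi^{-1}\,d\mu_k\Big)^{\!1/2}\!.
\end{equation*}
The first factor is $\lesssim s^{-(\gamma_k+d/2+1)/2}$ by (\ref{01}). For the second, the identity $|x^+-y^+|=\min_g|g.x-y|$ together with $\eta\in\mathrm{conv}(G.(y/\sqrt{s}))$ gives $A(x/\sqrt{s},y/\sqrt{s},\eta)\gtrsim|x^+-y^+|/\sqrt{s}$, hence $\phi^{-1}\le e^{-c|x^+-y^+|/\sqrt{s}}$; integrating against $d\mu_k$ and applying (\ref{888})--(\ref{Rr}) produces a factor $s^{\gamma_k+d/2}e^{-c\sqrt{t/s}}$, and combining yields (\ref{02}).

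The hard part will be the geometric comparison $A(x,y,\eta)\ge c|x^+-y^+|$ for $\eta\in\mathrm{conv}(G.y)$, which is needed both to control $\Phi$ from above and to obtain the decay of $\phi^{-1}$ outside $|x^+-y^+|\le\sqrt{t}$: it requires unpacking the identity $A^2=|x-\eta|^2+|y|^2-|\eta|^2$ together with $|\eta|\le|y|$ on the support of $\nu_y^G$, and exploiting the reflection invariance hidden in $|x^+-y^+|=\min_g|g.x-y|$. A secondary obstacle is the passage from the time-averaged energy estimate to the genuine pointwise-in-$t$ bound in (\ref{01}).
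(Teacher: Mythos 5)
Your Caccioppoli/Davies--Gaffney set-up for (\ref{01}) is legitimate up to the differential inequality: since $\Phi$ is $G$-invariant the product rule $T_j(u\Phi)=(T_ju)\Phi+u\,\partial_j\Phi$ does hold, and $|\nabla_x\Phi|\le Ct^{-1/2}\Phi$ is correct. But both steps you use to close the argument fail as stated. First, the bound $\Phi(x)=\phi(x/\sqrt t,y/\sqrt t)\le Ce^{c|x^+-y^+|/\sqrt t}$ is \emph{false}: already at $x=y$ one has $A(y,y,\eta)^2=2|y|^2-2\langle y,\eta\rangle$, which is of order $|y|^2$ when $\eta$ sits near an extreme point $g.y$, $g\ne e$, of $\mathrm{conv}(G.y)$, so $\phi(y,y)$ grows like $e^{c|y|}$ while $|y^+-y^+|=0$. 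The only available comparison is the relative one $\phi(x,y_0)\le\phi(z,y_0)\,e^{|x^+-z^+|}$ (inequality (\ref{phi})), and a uniform-in-$y$ bound on $E(t/2)=\int W_{t/2}(x,y)^2\Phi(x)\,d\mu_k(x)$ cannot be obtained from the crude Gaussian bound (\ref{dhk}) plus an exponential in $|x^+-y^+|$; one has to exploit the exact representation $K_t(x,y)=c_k^{-1}t^{-\gamma_k-d/2}\int e^{-A(x,y,\eta)^2/4t}\,d\nu_y(\eta)$ so that the Gaussian in $A$ beats $e^{\sqrt{1+A^2/t}}$ for each $\eta$ \emph{before} integrating against the probability measure $\nu_y^G$. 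Second, unweighted analyticity of $e^{-sL_k}$ says nothing about the weighted quantity $s\mapsto\int|T_jW_s|^2\Phi\,d\mu_k$; to pass from the time-average to a pointwise-in-$t$ bound you need sectoriality of the generator on $L^2(\Phi\,d\mu_k)$, and proving that is precisely the content of the paper's estimates $|T_j^2\phi|+|(T_j\phi(x,y)-T_j\phi(\sigma_\alpha.x,y))/\langle x,\alpha\rangle|\le C\phi$, the resulting form comparison, and the holomorphic extension of $W_\zeta$ on $L^2(\phi\,d\mu_k)$. What you dismiss as a ``secondary obstacle'' is the heart of the proof; the paper then gets general $t$ from $t=1$ by the scaling $W_t(x,y)=t^{-d/2-\gamma_k}\tilde W_1(x/\sqrt t,y/\sqrt t)$.

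For (\ref{02}), the lower bound $\phi(x/\sqrt s,y/\sqrt s)\ge e^{c|x^+-y^+|/\sqrt s}$ is correct, and the geometric fact you flag as hard is true and provable: for $\eta\in\mathrm{conv}(G.y)$ one has $\min_g|\eta-g.y|^2\le|y|^2-|\eta|^2$, hence $|x^+-y^+|\le|x-\eta|+\sqrt{|y|^2-|\eta|^2}\le\sqrt2\,A(x,y,\eta)$. The genuine gap is the last volume step: $\int_{|x^+-y^+|>\sqrt t}e^{-c|x^+-y^+|/\sqrt s}\,d\mu_k(x)$ is \emph{not} $O(s^{(d+2\gamma_k)/2}e^{-c\sqrt{t/s}})$ uniformly in $y$, because by (\ref{888}) $\mu_k(B(y,r))\approx r^d\prod_{\alpha}(|\langle y,\alpha\rangle|+r)^{k(\alpha)}$ exceeds $r^{d+2\gamma_k}$ by an unbounded factor when $y$ is far from the reflecting hyperplanes; so your second Cauchy--Schwarz factor is not controlled independently of $y$. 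The paper avoids this by \emph{not} discarding the $\eta$-integral: it bounds that factor by $\int_{\mathbb{R}^d}\tau_{-y}\big(e^{-|\cdot|/\sqrt s}\mathds{1}_{|\cdot|>\sqrt t}\big)\,d\mu_k=\int_{|x|>\sqrt t}e^{-|x|/\sqrt s}\,d\mu_k(x)$, using (\ref{trad}) and the fact that Dunkl translation preserves the $\mu_k$-integral, which recenters the computation at the origin where the growth is exactly $r^{d+2\gamma_k}$. You need this device (or an equivalent one) to obtain a constant independent of $y$.
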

 Let us first  observe   since $L_ke^{-tL_k}$ is bounded operator on $L^2(\mathbb{R}^d,d\mu_k)$ and
 $W_t(x,y)=e^{-(t/2)L_k}(W_{t/2}(.,y))(x)$ then $W_t(.,y)$  belong to the domain of the operator $L_k$ and  $T_jW_t(x, y)$ is well defined  in
 $L^2(\mathbb{R}^d,d\mu_k)$.
\par We will give  the proof of the theorem \ref{101} after proving Theorem \ref{999}.
\begin{proof}[Proof of  Theorem \ref{999}.]
     We start  with  the  Calder\'{o}n-Zygmund decomposition.
  For $f\in L^1(\mathbb{R}^d,d\mu_k)\cap L^2(\mathbb{R}^d,d\mu_k)$ and $\lambda>0$, there is a decomposition
  $f=g+\sum_i b_i =g+b$ such that the following   hold:\\
  (i) $|g(x)|\leq C\;\lambda$, for a.e. $x\in \mathbb{R}^d$.
 \\(ii)  there exists a sequence of balls $B(x_i; r_i)$  such that the support of
each $b_i$ is contained in $B_i=B(x_i; r_i)$ and
$\|b_i\|_{1,k}\leq C\;\lambda \mu_k(B_i).$
\\(iii)  $\displaystyle{\sum_i\mu_k(B_i)\leq C\;\frac{\|f\|_{1,k}}{\lambda}}$
 \\(iv) each  $x\in \mathbb{R}^d$ is contained in at most a finite number   of the balls  $B_i$.
 \par The proof of Theorem \ref{999}   consists of showing   the following    inequality
   \begin{equation}\label{11}
 \mu_k (\{x; |R_j f(x)| > \lambda\})\leq C\frac{\|f\|_{1,k}}{\lambda}.
  \end{equation}
In view of  the  Calder\'{o}n-Zygmund decomposition of $f$
$$\mu_k (\{x; |R_j (f)(x)| > \lambda\}) \leq
 \mu_k (\{x; |R_j g(x)| > \lambda/2\}) + \mu_k(\{x; |R_j (b)(x)| > \lambda/2\}).$$
Since  $R_j$ is bounded on $L^2(\mathbb{R}^d,d\mu_k)$  then we get by using  (i),
$$ \mu_k (\{x; |R_j g(x)| > \lambda/2\}) \leq C \;\frac{\|R_j(g)\|_{2,k}^2}{\lambda^2}\leq   C \;\frac{\|g\|_{2,k}^2}{\lambda^2}\leq C\; \frac{\|f\|_{1,k} }{\lambda}$$
Next we write
 $$R_j(b_i)= R_je^{-t_iL_k}b_i+ R_j(I-e^{-t_iL_k})b_i$$
where $t_i = r_i^2$  ($r_i$ is the radius of $B_i$). We claim that
$$ \mu_k(\{x; |\sum_iR_je^{-t_iL_k}b_i| > \lambda/4\})+\mu_k(\{x; |\sum_i (R_j(I-e^{-t_iL_k})b_i)| > \lambda/4\})\leq
 C\frac{\|f\|_{1,k}}{\lambda}. $$
Using  (\ref{LA}), (\ref{dhk}) and (ii) we have that

\begin{eqnarray*}
   |e^{-t_iL_k}b_i(x)|  &\leq  & C\; \int_{B_i} \frac{e^{-c\frac{|x^+-y^+|^2}{t_i}}}{ \mu_k(B(x, \sqrt{t_i})) )}|b_i(y)|
   d\mu_k(y)
  \\ &\leq & C\;\sum_{g\in G}\int_{B_i} \frac{e^{-c\frac{|g.x-y|^2}{t_i}}}{ \mu_k(B(g.x, \sqrt{t_i})) )}|b_i(y)|
 d\mu_k(y)
  \\ &\leq & C\;\sum_{g\in G} \frac{e^{-c'\frac{|g.x-x_i|^2}{t_i}}}{ \mu_k(B(g.x, \sqrt{t_i})) )}\int_{B_i}|b_i(y)|
 d\mu_k(y)
  \\ &\leq &C\;\lambda\; \sum_{g\in G} \frac{e^{-c'\frac{|g.x-x_i|^2}{t_i}}}{ \mu_k(B(g.x, \sqrt{t_i})) )}  \mu_k(B_i)
  \\ &\leq & C\;\lambda\;\sum_{g\in G}\frac{1}{ \mu_k(B(g.x, \sqrt{t_i})) )}\int_{ \mathbb{R}^d}  e^{-c''\frac{|g.x-y|^2}{t_i}} \mathds{1}_{B_i}(y) d\mu_k(y).
\end{eqnarray*}
Here $C,c,c',c''$ are positive
constants. Now  we note that
\begin{eqnarray*}
 &&\left\| \sum_ie^{-t_iL_k}b_i\right\|_{2,k}^2\leq C\lambda\left\|\sum_{i}\sum_{g\in G} \frac{1}{ \mu_k(B(g. , \sqrt{t_i})) )}\int_{ \mathbb{R}^d}  e^{-c''\frac{|g. -y|^2}{t_i}} \mathds{1}_{B_i}(y)d\mu_k(y)\right\|_{2,k}\\
  &&\leq C\lambda\;\sup_{\|u\|_{2,k}=1} \sum_{i}\sum_{g\in G}\int_{\mathbb{R}^d}\int_{ \mathbb{R}^d}\frac{e^{-c''\frac{|gx -y|^2}{t_i}}}{ \mu_k(B(gx , \sqrt{t_i})) )}| u(x)| \mathds{1}_{B_i}(y)d\mu_k(y)d\mu_k(x).
  \end{eqnarray*}
   Since $B(y , \sqrt{t_i})\subset B(g.x , |g.x-y|+\sqrt{t_i}) $ then from (\ref{Rr}) we have that
  $$\mu_k(B(y , \sqrt{t_i}))  \leq \left( 1+\frac{|g.x-y|}{\sqrt{t_i}}\right)^{d+2\gamma_k}\mu_k(B(g.x , \sqrt{t_i}))$$
Putting $c_1=c''/2$ and using the fact that
$ s\rightarrow (1+s)^{d+2\gamma_k}e^{-c_1s^2}$ is bounded it follows that
\begin{eqnarray*}
&&\sum_{i}\sum_{g\in G}\int_{ \mathbb{R}^d}\frac{e^{-c''\frac{|gx -y|^2}{t_i}}}{ \mu_k(B(gx , \sqrt{t_i})) )}| u(x)|  d\mu_k(x)
 \\&&\qquad\qquad\qquad\qquad\leq \sum_{i}\sum_{g\in G}\frac{1}{\mu_k(B(y , \sqrt{t_i}))}\int_{ \mathbb{R}^d}e^{-c_1\frac{|gx -y|^2}{t_i}}| u(x)|d\mu_k(x).
\end{eqnarray*}
Writing
\begin{eqnarray*}
 \int_{ \mathbb{R}^d}e^{-c_1\frac{|gx -y|^2}{t_i}}| u(x)|  d\mu_k(x)&=&  \int_{ |g.x-y|< \sqrt{t_i} }e^{-c_1\frac{|gx -y|^2}{t_i}}| u(x)| d\mu_k(x) \\ &+&  \sum_{p=0}^\infty \int_{ 2^p\sqrt{t_i}\leq |g.x-y|\leq 2^{p+1} \sqrt{t_i} }e^{-c_1\frac{|gx -y|^2}{t_i}}| u(x)| d\mu_k(x)
 \end{eqnarray*}
$$\leq \int_{ |x-g^{-1}.y|< \sqrt{t_i} } | u(x)| d\mu_k(x) +  \sum_{p=0}^\infty e^{-c_1 2^{2p}}\int_{    |x-g^{-1}.y|\leq 2^{p+1} \sqrt{t_i} } | u(x)|
 d\mu_k(x)$$
$$\leq \left( \mu_k(B(y , \sqrt{t_i}))+ \sum_{p=0}^\infty e^{-c_1 2^{2p}} \mu_k(B(y , 2^{p+1}\sqrt{t_i}))\right) Mu(g^{-1}.y) $$
where
$$Mu(y)=\sup_{r>0}\frac{1}{\mu_k(B(y,r)))}\int_{|x-y|\leq r}|u(x)|d\mu_k(x)$$
is the Hardy-Littlewood maximal function. Noting  here that $\mu_k$ is $G$-invariant. Making the use of (\ref{Rr}),
\begin{eqnarray*}
 \frac{1}{\mu_k(B(y , \sqrt{t_i}))}\int_{ \mathbb{R}^d}e^{-c_1\frac{|gx -y|^2}{t_i}}| u(x)|d\mu_k(x)& \leq&
\left(1+\sum_{p=0}^\infty 2^{(p+1)(d+2\gamma_k)}e^{-c_1 2^{2p}}\right)Mu(g^{-1}.y)
\\&\leq &C Mu(g^{-1}.y).
\end{eqnarray*}
Hence we obtain
\begin{eqnarray*}
&&\left\| \sum_{i}\sum_{g\in G}\frac{1}{ \mu_k(B(g. , \sqrt{t_i})) )}\int_{ \mathbb{R}^d}  e^{-c'\frac{|g. -y\|^2}{t_i}} \mathds{1}_{B_i}(y)d\mu_k(y)\right\|_{2,k}\qquad\qquad\qquad\qquad\qquad\qquad
\\&&\qquad\qquad\qquad\qquad\qquad\qquad\qquad\leq C\; \sum_{g\in G}\sup_{\|u\|_{2,k}=1}\int_{ \mathbb{R}^d} Mu(g^{-1}.y)\sum_{i}\mathds{1}_{B_i}(y)d\mu_k(y)
\\&&\qquad\qquad\qquad\qquad\qquad\qquad\qquad\leq \left\|\sum_i\mathds{1}_{B_i}\right\|_{2,k},
\end{eqnarray*}
since  the maximal function   $M$ is bounded on $L^2(\mathbb{R}^d,d\mu_k)$. Therefore,
$$\left\| \sum_ie^{-t_iL_k}b_i\right\|_{2,k}^2\leq C\lambda^2\left\|\sum_i\mathds{1}_{B_i}\right\|_{2,k}^2
\leq C\lambda^2\sum_i\mu_k(B_i)\leq C\lambda\|f\|_{1,k} $$
and from the $L^2$-booundedness of $R_j$ we have
$$\mu_k\left\{x;\;  R_j\left(\sum_ie^{-t_iL_k}b_i\right)(x)>\lambda/4\right\}\leq C\; \frac{\|f\|_{1,k}}{\lambda}.$$
\par Consider now the term $\displaystyle{R_j\left(\sum_i(I-e^{-t_iL_k}\right)b_i}$. Let
$$B_i^*=\bigcup_{g\in G}B(g.x_i,2r_i).$$
We write
\begin{eqnarray*}
&&\mu_k\left\{x;\;  R_j\left(\sum_i(I-e^{-t_iL_k})b_i\right)(x)>\lambda/4\right\}\leq
\\&& \qquad\qquad\qquad\sum_i\mu_k(B_i^*)+ \mu_k\left\{x\notin\bigcup_iB_i^*;\;  R_j\left(\sum_i(I-e^{-t_iL_k})b_i\right)(x)>\lambda/4\right\}
 \end{eqnarray*}
The doubling volume property of the measure $\mu_k$ and property  (iii) in the  Calderon-Zygmund  decomposition imply that
$$\sum_i\mu_k(B_i^*)\leq  C\; \frac{\|f\|_{1,k}}{\lambda}.$$
On the other hand we have
\begin{eqnarray}
\nonumber &&  \mu_k\left\{x\notin\bigcup_iB_i^*;\;  R_j\left(\sum_i(I-e^{-t_iL_k})b_i\right)(x)>\lambda/4\right\}\qquad\qquad\qquad
\\&&\qquad\qquad\qquad\qquad\qquad\qquad\qquad
\leq \frac{4}{\lambda} \sum_i\int_{ \mathbb{R}^d\setminus B_i^*} |R_j(I-e^{-t_iL_k})(b_i)(x)|\mu_k(x)|.\label{012}
\end{eqnarray}
Here  we use   the integral representation of the operator $R_j(I-e^{-t_iL_k})$ by a kernel  $\mathcal{K}_i$. In fact,
from (\ref{1/2}),
\begin{eqnarray*}
L_k^{-1/2}(I-e^{-t_iL_k})&=& \frac{1}{\sqrt{\pi}}\int_{0}^{+\infty}\frac{e^{-sL_k}}{\sqrt{s}}ds-\frac{1}{\sqrt{\pi}}\int_{0}^{+\infty}\frac{e^{-(s+t_i)L_k}}{\sqrt{s}}ds
\\&  =& \frac{1}{\sqrt{\pi}} \int_{0}^{+\infty}\left(\frac{1}{\sqrt{s}}- \frac{\mathds{1}_{\{s>t_i\}} }{\sqrt{s-t_i}}\right) e^{-sL_k}ds
\end{eqnarray*}
 Putting  for $ x\notin G.y $
$$\mathcal{K}_i(x,y)= \frac{1}{\sqrt{\pi}}\int_{0}^{+\infty}\left(\frac{1}{\sqrt{s}}- \frac{\mathds{1}_{\{s>t_i\}} }{\sqrt{s-t_i}}\right)T_jW_s(x,y)ds
= \int_{0}^{+\infty}g_i(s)T_jW_s(x,y)ds, $$
it yield   that
$$ R_j(I-e^{-t_iL_k})(u)(x)=T_j L_k^{-1/2}(I-e^{-t_iL_k})(u)(x)=\int_{\mathbb{R}^d}\mathcal{K}_i(x,y)u(y)d\mu_k(y)$$
for all   $u\in L^2(\mathbb{R}^2,d\mu_k)$ with  compact support
and for all a.e. $x\in \mathbb{R}^d$ such that $G.x\cap supp(u)= \emptyset$.
 \par Now one can write
 \begin{eqnarray*}
 &&\int_{ \mathbb{R}^d\setminus B_i^*} |R_j(I-e^{-t_iL_k})(b_i)(x)|d\mu_k(x)|
 \\&&\qquad\qquad\qquad\qquad\qquad\leq\int_{ \mathbb{R}^d\setminus B_i^*}  \int _{\mathbb{R}^d} |\mathcal{K}_i(x,y)||b_i(y)| d\mu_k(y)d\mu_k(x)
 \\ &&\qquad\qquad\qquad\qquad\qquad\leq   \int _{\mathbb{R}^d}\left\{\int_{ |x^+-y^+|>\sqrt{t_i}}   |\mathcal{K}_i(x,y)| d\mu_k(x)\right\}|b_i(y)|d \mu_k(y)
\end{eqnarray*}
 The second inequality follows from the fact that  for  $x\notin B_i^*$ and $y\in B_i$ we have for all $g\in G$
 $$|g.x-y|\geq |g(x)-x_i|-|y-x_i|>2\sqrt{t_i}-\sqrt{t_i}=\sqrt{t_i}.$$
and so, $|x^+-y^+|=\min_{g\in G}|g.x-y|>\sqrt{t_i}.$
\par We claim that there is a constant $C>0$  so that for all $i$
  $$\int_{ |x^+-y^+|>\sqrt{t_i}}  |\mathcal{K}_i(x,y)| d\mu_k(x)\leq C$$
 Indeed, by using the estimate (\ref{02})
 \begin{eqnarray*}
 && \int_{ |x^+-y^+|>\sqrt{t_i}}   |\mathcal{K}_i(x,y)| d\mu_k(x)
 \\&&\qquad\qquad\qquad\qquad\leq \int_{0}^{+\infty}g_i(s) \int_{ |x^+-y^+|>\sqrt{t_i}} |T_jW_s(x,y)|d\mu_k(x)ds
 \\&&\qquad\qquad\qquad\qquad\leq  C \int_{0}^{+\infty}\frac{ g_i(s)}{\sqrt{s}} e^{-c\sqrt{t_i/s}}
   \\&&\qquad\qquad\qquad\qquad\leq \int_{0}^{t_i} \frac{e^{-c\sqrt{t_i/s}} }{s} ds+\int_{t_i}^{+\infty}
\left(\frac{1}{\sqrt{s-t_i}}- \frac{1 }{\sqrt{s}}\right)e^{-c\sqrt{t_i/s}}s^{-1/2} ds
  \\&&\qquad\qquad\qquad\qquad \leq I_1+I_2.
 \end{eqnarray*}
We have
$$I_1=\int_0^1 \frac{e^{-c/\sqrt{u}} }{u} du.$$
and
$$I_2 \leq \int_{t}^{+\infty}
\left(\frac{1}{\sqrt{s-t}}- \frac{1 }{\sqrt{s}}\right) s^{-1/2} ds= \int_{0}^{+\infty}\left(\frac{1}{\sqrt{u(u+1)}}- \frac{1 }{u+1} \right)du  $$
  So, it follows that
 \begin{eqnarray*}
 \sum_i\int_{ \mathbb{R}^d\setminus B_i^*} |(R_j(I-e^{-t_iL_k})(b_i)(x)|d\mu_k(x)
 \leq C \sum_i\|b_i(y)\|_{1,k} \leq C\|f\|_{1,k}
\end{eqnarray*}
Thus in view of (\ref{012})  we deduce   that
\begin{eqnarray*}
  \mu_k\left\{x\notin\bigcup_iB_i^*;\;  R_j\left(\sum_i(I-e^{-t_iL_k})b_i\right)(x)>\lambda/4\right\}  \leq C\;  \frac{ \|f\|_{1,k}}{\lambda}.
  \end{eqnarray*}
  This concludes the proof  of  Theorem \ref{999}.
\end{proof}
We  turn now to   proving  Theorem \ref{101}. In order to  adapt   the arguments used in \cite[lemma 2.1 ]{JD2}  we shall first  prove some preliminaries lemmas \begin{lem}
 For all $x,y,y_0\in \mathbb{R}^d$ we have
 \begin{equation}\label{phi}
 \phi(x,y_0)\leq \phi(y,y_0)  e^{|x^+-y^+|}
\end{equation}
   \end{lem}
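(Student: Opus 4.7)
The plan is to exploit two features in turn: a uniform Lipschitz bound on the integrand (in the $x$-variable), followed by the $G$-invariance of $\phi(\cdot,y_0)$, which lets us replace the Euclidean distance $|x-y|$ by the reduced distance $|x^+-y^+|$.

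First I would fix $y_0$ and $\eta$ and look at the map $x\mapsto F(x):=\sqrt{1+A(x,y_0,\eta)^2}=\sqrt{1+|x-\eta|^2+|y_0|^2-|\eta|^2}$. Note that $|y_0|^2-|\eta|^2\geq 0$ because $\eta\in\co(G.y_0)$ lies in the convex hull of points of equal norm $|y_0|$, so the quantity under the root is nonnegative. A direct gradient computation gives $|\nabla F(x)|=|x-\eta|/F(x)\leq 1$, hence $F$ is $1$-Lipschitz in $x$, uniformly in $\eta$. Therefore
\[
e^{\sqrt{1+A(x,y_0,\eta)^2}}\;\leq\;e^{\sqrt{1+A(y,y_0,\eta)^2}}\,e^{|x-y|}.
\]
Integrating against the probability measure $d\nu_{y_0}^G(\eta)$ yields the weaker bound
\[
\phi(x,y_0)\;\leq\;\phi(y,y_0)\,e^{|x-y|}.
\]

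Second, I would verify (as the text already asserts) that $\phi(\cdot,y_0)$ is $G$-invariant. Indeed, for $h\in G$ the substitution $\eta=h.\tilde\eta$ gives $A(h.x,y_0,h.\tilde\eta)=A(x,y_0,\tilde\eta)$ because $h$ is an isometry; and by construction $d\nu_{y_0}^G$ is $G$-invariant since it averages $d\nu_{g.y_0}$ over all $g\in G$. Thus $\phi(h.x,y_0)=\phi(x,y_0)$ for every $h\in G$.

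Finally I would combine the two facts. Pick $g_1,g_2\in G$ so that $x^+=g_1.x$ and $y^+=g_2.y$. By $G$-invariance of $\phi(\cdot,y_0)$ we have $\phi(x,y_0)=\phi(x^+,y_0)$ and $\phi(y,y_0)=\phi(y^+,y_0)$. Applying the Lipschitz inequality to the pair $(x^+,y^+)$ then gives
\[
\phi(x,y_0)=\phi(x^+,y_0)\;\leq\;\phi(y^+,y_0)\,e^{|x^+-y^+|}\;=\;\phi(y,y_0)\,e^{|x^+-y^+|},
\]
which is the claim. The only step that needs any care is the verification that $|y_0|^2-|\eta|^2\geq 0$ on the support of $\nu_{y_0}^G$ (so that $A$ is genuinely defined as a nonnegative real and the Lipschitz estimate is legitimate); everything else is a direct unwinding of the definitions.
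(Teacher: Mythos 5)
Your proof is correct and follows essentially the same route as the paper: reduce to $x^+,y^+$ via the $G$-invariance of $\phi(\cdot,y_0)$, then show the exponent $\sqrt{1+A(\cdot,y_0,\eta)^2}$ is $1$-Lipschitz (the paper does this by the triangle inequality and the elementary bound $\sqrt{1+(a+b)^2+c}\leq a+\sqrt{1+b^2+c}$ for $c\geq 0$, you by a gradient computation, both resting on $|\eta|\leq |y_0|$ on $\co(G.y_0)$). Your write-up is in fact slightly more complete, since it makes explicit the $G$-invariance verification and the nonnegativity of $|y_0|^2-|\eta|^2$ that the paper leaves implicit.
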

  \begin{proof}
  As  $\phi(.,y)$ is $G-invariant$ we have
  $ \phi(x,y)=  \phi(x^+,y)$. Then (\ref{phi})  is a consequence the following inequality
  \begin{eqnarray*}
   \sqrt{1+ |x^+-\eta|^2+|y_0|^2-|\eta|^2}&\leq& \sqrt{1+(|x^+-y^+|+|y^+-\eta|)^2+|y_0|^2-|\eta|^2}
  \\ &\leq&  |x^+-y^+|+\sqrt{1+ |y^+-\eta|^2+|y_0|^2-|\eta|^2},
\end{eqnarray*}
 for all $x,y,y_0\in\mathbb{R}^d$ and $\eta \in Conv(G.y_0)$
  \end{proof}
   \begin{lem}\label{456}
  Let $\lambda >0$ and  $\phi_\lambda$ be the function  given by
  $$\phi_\lambda(x,y)=\left\{\int_{\mathbb{R}^d}e^{ \sqrt{1+A(x,y,\eta)^2}}\;d\nu_y^G(\eta)\right\}^\lambda.$$
  The semigroup $W_t$ acting on the Hilbert space $L^2(\phi_{\lambda}(x,y)d\mu_k(x))$  with norms satisfying
  $$\|W_t\|_{L^2(\phi_\lambda(x,y)d\mu_k)\rightarrow L^2(\phi_\lambda(x,y)d\mu_k) }\leq Ce^{c \lambda^2 t}.$$
  Constants $C,c$ are independent of $\lambda$.
 \end{lem}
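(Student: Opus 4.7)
The plan is to adapt Davies' perturbation method to the Dunkl--Schr\"odinger semigroup. For fixed $y\in\mathbb{R}^d$ I would work with $f$ in a dense subclass (for instance $C_c^\infty(\mathbb{R}^d)$, for which $W_tf$ has Gaussian decay so all integrals below converge absolutely) and consider the weighted energy
$$F(t)=\int_{\mathbb{R}^d}|W_tf(x)|^2\,\phi_\lambda(x,y)\,d\mu_k(x).$$
The goal is to establish a first-order differential inequality $F'(t)\le c\lambda^2 F(t)$ and then conclude by Gronwall. The geometric input needed is the preceding lemma: combined with $|x^+-y^+|\le|x-y|$, it yields that $\log\phi(\cdot,y)$ is $1$-Lipschitz, so that $|\partial_j\phi_\lambda(\cdot,y)|\le\lambda\,\phi_\lambda(\cdot,y)$ a.e.\ for every $j$.

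Setting $u=W_tf\in D(L_k)$ and using self-adjointness together with the form identity $\langle L_ku,u\phi_\lambda\rangle=q_k(u,u\phi_\lambda)$, I would write
$$F'(t)=-2\,\mathrm{Re}\,q_k(u,u\phi_\lambda)=-2\sum_{j=1}^d\mathrm{Re}\!\int_{\mathbb{R}^d} T_ju\,\overline{T_j(u\phi_\lambda)}\,d\mu_k-2\!\int_{\mathbb{R}^d} V|u|^2\phi_\lambda\,d\mu_k.$$
Since $\phi_\lambda(\cdot,y)$ is smooth and $G$-invariant, the Dunkl--Leibniz rule gives $T_j(u\phi_\lambda)=\phi_\lambda\,T_ju+u\,\partial_j\phi_\lambda$, so the right-hand side splits into a negative-definite squared-gradient piece $-2\sum_j\int\phi_\lambda|T_ju|^2\,d\mu_k$, a cross term, and the nonpositive potential contribution $-2\int V|u|^2\phi_\lambda\,d\mu_k$. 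The cross term is absorbed via the pointwise inequality $2\lambda|u||T_ju|\phi_\lambda\le\phi_\lambda|T_ju|^2+\lambda^2\phi_\lambda|u|^2$, which eats at most one copy of the squared-gradient piece and leaves $d\lambda^2F(t)$. Dropping the nonnegative $V$-term yields $F'(t)\le d\lambda^2F(t)$, and Gronwall gives $F(t)\le e^{d\lambda^2 t}F(0)$, i.e.\ the stated bound with $C=1$ and $c=d$, uniformly in $\lambda$ and $y$.

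The main delicate step is the Dunkl--Leibniz identity $T_j(u\phi_\lambda)=\phi_\lambda T_ju+u\,\partial_j\phi_\lambda$: it holds precisely because the $\nu_y^G$-averaging in the definition of $\phi$ makes $\phi_\lambda(\cdot,y)$ $G$-invariant in its first variable, which kills the nonlocal difference part of $T_j$ acting on that factor. Without this invariance the cross term would involve reflected values of $u$ and the method would collapse; it is essential that one never has to apply $T_j$ to the non-invariant quantity $\partial_j\phi_\lambda$. A minor technicality is checking that $u\phi_\lambda$ actually lies in $D(q_k)$ so that the form duality applies: this is legitimate on the smooth compactly supported core, where Gaussian decay of $W_tf$ dominates the sub-exponential growth of $\phi_\lambda$, and the full estimate then extends to $L^2(\phi_\lambda(\cdot,y)\,d\mu_k)$ by density of $C_c^\infty$.
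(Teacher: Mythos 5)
Your Davies-type perturbation argument is a genuinely different route from the paper's, and its algebraic core is sound: the Leibniz identity $T_j(u\phi_\lambda)=\phi_\lambda T_ju+u\,\partial_j\phi_\lambda$ does hold because $\phi_\lambda(\cdot,y)$ is $G$-invariant, the bound $|\partial_j\phi_\lambda|\le\lambda\phi_\lambda$ is correct, and the absorption of the cross term plus Gronwall would give $e^{d\lambda^2t}$. The paper, however, never differentiates in $t$ and never touches the quadratic form. It uses the pointwise domination $0\le W_t(x,z)\le K_t(x,z)$ together with $\int K_t(x,\cdot)\,d\mu_k=1$ to get $|W_tu(x)|^2\le\int K_t(x,z)|u(z)|^2\,d\mu_k(z)$ by Cauchy--Schwarz, then applies the Harnack-type inequality $\phi_\lambda(x,y)\le\phi_\lambda(z,y)e^{\lambda|x^+-z^+|}$ from the preceding lemma and reduces everything to the elementary Gaussian integral $\int K_t(x,z)e^{\lambda|x^+-z^+|}\,d\mu_k(x)\le Ce^{c\lambda^2t}$. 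This buys complete insensitivity to the potential (only $V\ge0$ is used, through $W_t\le K_t$) and, more importantly, avoids every domain question.

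That is where your write-up has a genuine gap. To invoke $\langle L_ku,u\phi_\lambda\rangle=q_k(u,u\phi_\lambda)$ you need $u\phi_\lambda\in D(q_k)$, i.e.\ $T_j(u\phi_\lambda)$ and $V^{1/2}u\phi_\lambda$ in $L^2(d\mu_k)$, and the justification you give does not deliver this: $\phi_\lambda(\cdot,y)$ grows like $e^{\lambda|x|}$ (exponentially, not sub-exponentially), and even for $f\in C_c^\infty$ the function $u=W_tf$ is \emph{not} compactly supported, so you are not ``on the smooth compactly supported core.'' Pointwise Gaussian decay of $|u|\le e^{-tA_k}|f|$ does not by itself control $T_ju$ or $V^{1/2}u$ against the exponential weight when $V$ is merely in $L^2_{loc}$. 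The standard repair is to run the differential inequality with a truncated or regularized weight (e.g.\ $\min(\phi_\lambda,N)$, suitably smoothed, or $\phi_\lambda e^{-\eps|x|^2}$), obtain the bound uniformly in the truncation, and pass to the limit; without some such step the form-duality line is not justified. If you want the result as stated with minimal technology, the paper's domination argument is the cleaner path.
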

\begin{proof}
From the fact that  $W_t(x,y)\leq K_t(x,y)$ and $\|K_t(x,.)\|_{1,k}=1$ it follows that
 $$|W_t(u)(x)|^2\leq \int_{\mathbb{R}^d}K_t(x,z)|u(z)|^2w_k(z)dz$$
 and  from (\ref{phi})
  $$\int_{\mathbb{R}^d}|W_t(u)(x)|^2\phi_\lambda(x,y)d\mu_k(x) \leq\int_{\mathbb{R}^d}|u(z)|^2\phi_\lambda(z,y)\left(\int_{\mathbb{R}^d}K_t(x,z)e^{\lambda|x^+-z^+|}d\mu_k(x)\right)d\mu_k(z). $$
  Now   using  the  Gaussian upper bounds for the heat kernel (\ref{dhhk}) we get
  \begin{eqnarray*}
 \int_{\mathbb{R}^d}K_t(x,z)e^{\lambda|x^+-z^+|}d\mu_k(x) &\leq& C\; t^{-d/2}\int_{\mathbb{R}^d}e^{-|x^+-z^+|^2/4t}e^{\lambda|x^+-z^+|}dx
\\&\leq& C\; \sum_{g\in G}\int_{\mathbb{R}^d}e^{-|x-gz|^2/4t}e^{|x-gz|}dx\leq Ce^{c\lambda ^2t}.
  \end{eqnarray*}
  \end{proof}
\begin{cor}\label{666}
   Let $y\in \mathbb{R}^d$.  The semigroup $W_t=e^{-tL_k}$, $t>0$,  acting on $L^2( \phi(x,y)d\mu_k(x))$ has
the unique extension to a holomorphic semigroup $W_\zeta$, $\zeta\in \{\zeta\in \mathbb{C},\; |Arg\zeta|\leq \pi/4\}$
such that
 $$\|W_\zeta\|_{L^2(\phi(x,y)d\mu_k)\rightarrow L^2(\phi(x,y)d\mu_k) }\leq Ce^{c \Re \zeta},$$
 where $c$ and $C$ are independent of $y$.
 \end{cor}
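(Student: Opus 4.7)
The plan is to combine Lemma \ref{456} with the self-adjointness of $L_k$ on $L^2(d\mu_k)$ in order to extend the real-time weighted estimate into the sector, via a Phragm\'en--Lindel\"of / Stein-interpolation argument.

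First, the self-adjointness and positivity of $L_k$ on $L^2(d\mu_k)$ and the spectral theorem produce a holomorphic contraction semigroup $W_\zeta=e^{-\zeta L_k}$ on the full right half-plane $\{\Re\zeta>0\}$, with $\|W_\zeta\|_{L^2(d\mu_k)\to L^2(d\mu_k)}\le 1$. Since $\phi(x,y)\ge e$ pointwise (immediate from $\sqrt{1+A^2}\ge 1$ in the defining integral), one has the continuous embedding $L^2(\phi\,d\mu_k)\hookrightarrow L^2(d\mu_k)$; consequently $W_\zeta u$ is unambiguously defined for every $u\in L^2(\phi\,d\mu_k)$, and uniqueness of the holomorphic extension into the sector is automatic from uniqueness on $L^2(d\mu_k)$.

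For the norm estimate on $L^2(\phi\,d\mu_k)$, I combine Lemma \ref{456} with the unweighted contractivity through the analytic operator family on $L^2(d\mu_k)$
$$
S_z\;=\;\phi^{z/2}(\cdot,y)\,W_\zeta\,\phi^{-z/2}(\cdot,y),\qquad 0\le\Re z\le\Lambda,
$$
for a parameter $\Lambda\ge 1$. The unimodularity of $\phi^{it/2}$ gives $\|S_{it}\|_{L^2(d\mu_k)}\le\|W_\zeta\|_{L^2(d\mu_k)}\le 1$ for every $\Re\zeta\ge 0$, while pulling the unimodular factors through in $\phi^{\pm(\Lambda+it)/2}=\phi^{\pm\Lambda/2}\phi^{\pm it/2}$ yields $\|S_{\Lambda+it}\|_{L^2(d\mu_k)}\le\|W_\zeta\|_{L^2(\phi^{\Lambda}\,d\mu_k)}$. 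Hadamard's three-lines theorem at $z=1$ then produces
$$
\|W_\zeta\|_{L^2(\phi\,d\mu_k)\to L^2(\phi\,d\mu_k)}\;=\;\|S_{1}\|_{L^2(d\mu_k)}\;\le\;\|W_\zeta\|_{L^2(\phi^{\Lambda}\,d\mu_k)}^{1/\Lambda},
$$
so the corollary is reduced to proving $\|W_\zeta\|_{L^2(\phi^{\Lambda}\,d\mu_k)}\le Ce^{c\Lambda^{2}\Re\zeta}$ for $\zeta$ in the sector.

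The main obstacle is therefore the complex-time upgrade of Lemma \ref{456}: $\|W_\zeta\|_{L^2(\phi^{\Lambda}\,d\mu_k)}\le Ce^{c\Lambda^{2}\Re\zeta}$ for $|\arg\zeta|\le\pi/4$. I plan to rerun the proof of Lemma \ref{456} with $K_t$ replaced by its holomorphic continuation $K_\zeta$ (available from the explicit formula after (\ref{1})). On the sector $|\arg\zeta|\le\pi/4$ the Gaussian factor $\exp(-(|x|^2+|y|^2)/(4\zeta))$ has modulus at most $\exp(-|x^+-y^+|^2/(8\Re\zeta))$, because $\Re(1/\zeta)=\cos(\arg\zeta)/|\zeta|\ge 1/(\sqrt{2}|\zeta|)\ge 1/(2\Re\zeta)$---and it is precisely this calculation that singles out the opening angle $\pi/4$, beyond which the constant in the exponent degrades. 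Combined with the intertwining representation $E_k(x/(2\zeta),y)=V_k(e^{\langle\cdot,y\rangle/(2\zeta)})(x)$ (whose modulus is controlled by a Gaussian of the same order) and the pointwise domination $|W_\zeta u|\le|K_\zeta|*_k|u|$ (which persists for complex $\zeta$ via a Trotter-product argument exploiting $V\ge 0$ and the $L^2$-unitarity of $e^{-i\Im\zeta V/n}$), the Gaussian integral computation underlying Lemma \ref{456} goes through verbatim, yielding the complex-time weighted estimate. Taking $\Lambda=1$ in the Hadamard inequality completes the proof of the bound, and holomorphy of the extension follows from the spectral-theorem holomorphy on $L^2(d\mu_k)$ together with the continuous embedding above.
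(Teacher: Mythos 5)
Your overall strategy is genuinely different from the paper's, and it contains a gap at its central step. The paper proves the corollary by invoking Lemma \ref{456} together with the argument of Dziubanski--Zienkiewicz: the point of Lemma \ref{456} is that the weighted bound $\|W_t\|_{L^2(\phi_\lambda d\mu_k)}\leq Ce^{c\lambda^2 t}$ holds for \emph{every} $\lambda>0$ with constants uniform in $\lambda$, and it is this whole one-parameter family (with its quadratic growth in $\lambda$), combined with the unweighted contractivity of $W_\zeta$ on $L^2(d\mu_k)$ for $\Re\zeta\geq 0$, that feeds a Phragm\'en--Lindel\"of/Davies-type perturbation argument and produces the holomorphic extension on the sector \emph{without ever estimating a complex-time kernel}. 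In your scheme the family plays no role: your three-lines reduction ends with the inequality $\|W_\zeta\|_{L^2(\phi\,d\mu_k)}\leq\|W_\zeta\|_{L^2(\phi^\Lambda d\mu_k)}^{1/\Lambda}$, which at $\Lambda=1$ is a tautology and for $\Lambda>1$ gives nothing better; worse, the endpoint $\Re z=\Lambda$ of your interpolation is itself a \emph{complex-time} weighted bound, i.e.\ exactly the statement to be proven, so the interpolation does not reduce the difficulty at all. The correct use of interpolation here couples the weight exponent to the time variable so that the ``easy'' endpoint carries the imaginary time with weight $\phi^0=1$ and the ``hard'' endpoint carries the weight with \emph{real} time only; you never set this up.

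The step that actually fails is the claimed pointwise domination $|W_\zeta u|\leq |K_\zeta|*_k|u|$ for non-real $\zeta$. The Trotter product gives $|(e^{-\zeta A_k/n}e^{-\zeta V/n})^n u|\leq (|K_{\zeta/n}|*_k)^n|u|$, and one cannot replace $(|K_{\zeta/n}|*_k)^n$ by $|K_\zeta|*_k$, since $|a*_k b|\leq |a|*_k|b|$ goes in the wrong direction. Quantitatively, from the explicit formula one gets $|K_{\zeta/n}(x,y)|\leq (\cos\arg\zeta)^{-\gamma_k-d/2}K_{s_n}(x,y)$ with $s_n=|\zeta/n|^2/\Re(\zeta/n)$, and the per-step constant $(\cos\arg\zeta)^{-\gamma_k-d/2}$ is strictly greater than $1$ for $\Im\zeta\neq 0$ and independent of $n$; iterating $n$ times produces a factor $(\cos\arg\zeta)^{-n(\gamma_k+d/2)}\to\infty$. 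So the ``goes through verbatim'' claim conceals precisely the obstruction that the Davies/Phragm\'en--Lindel\"of machinery (and, in this paper, the quadratic-form sectoriality estimates of Lemmas \ref{123} and \ref{555} on $L^2(\phi\,d\mu_k)$) is designed to circumvent. Your preliminary observations (that $\phi\geq e$, hence $L^2(\phi\,d\mu_k)\hookrightarrow L^2(d\mu_k)$ and uniqueness of the extension; and that $\Re(1/\zeta)\geq 1/(2\Re\zeta)$ on the sector, which explains the angle $\pi/4$) are correct but do not bridge this gap.
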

    This can be  obtained  using  Lemma \ref{456} and a similar proof of  \cite[Prop. 3.2]{JD}.
 \begin{lem}\label{123}
 There exists a constant $C>0$ such that for all $x,y\in \mathbb{R}^d$ and $\alpha \in R^+$
   $$|T_j^2\phi(x,y)| +\left|\dfrac{T_j\phi(x,y)-T_j\phi(\sigma_\alpha.x,y)}{\langle x,\alpha\rangle}\right|\leq C \phi(x,y),
   \qquad j=1,...,d.$$
 \end{lem}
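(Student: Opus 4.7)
The plan is to use the $G$-invariance of $\phi(\cdot,y)$ to reduce both quantities in the lemma to two pointwise bounds of the form ``second-order object $\le C\phi$'', and then verify each by differentiating under the integral, supplemented in one place by a convexity argument along a line segment. Since $\phi(\cdot,y)$ is $G$-invariant, the reflection terms in $T_j$ annihilate $\phi$, so $T_j\phi=\partial_j\phi$. Differentiating the identity $\phi(\sigma_\alpha.x,y)=\phi(x,y)$ gives $\nabla\phi(\sigma_\alpha.x,y)=\sigma_\alpha\nabla\phi(x,y)$, whence $\partial_j\phi(\sigma_\alpha.x,y)=\partial_j\phi(x,y)-\langle\alpha,e_j\rangle\partial_\alpha\phi(x,y)$. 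Consequently
\begin{equation*}
\frac{T_j\phi(x,y)-T_j\phi(\sigma_\alpha.x,y)}{\langle\alpha,x\rangle}=\langle\alpha,e_j\rangle\,\frac{\partial_\alpha\phi(x,y)}{\langle\alpha,x\rangle},
\end{equation*}
and $T_j^2\phi(x,y)=\partial_j^2\phi(x,y)+\sum_{\alpha\in R^+}k(\alpha)\langle\alpha,e_j\rangle^2\,\partial_\alpha\phi(x,y)/\langle\alpha,x\rangle$. So the lemma reduces to the pointwise bounds $|\partial_j^2\phi(x,y)|\le C\phi(x,y)$ and $|\partial_\alpha\phi(x,y)/\langle\alpha,x\rangle|\le C\phi(x,y)$.

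Setting $F=F(x,y,\eta)=\sqrt{1+A(x,y,\eta)^2}$, so $\phi(x,y)=\int e^F\,d\nu_y^G(\eta)$, I note that $|\eta|\le|y|$ on $\supp\nu_y^G$ forces $F^2\ge 1+|x-\eta|^2\ge1$. Direct computation yields $\partial_jF=(x_j-\eta_j)/F$ and $\partial_j^2F=(F^2-(x_j-\eta_j)^2)/F^3$, each of absolute value at most $1$. Differentiating under the integral then gives
\begin{equation*}
|\partial_j^2\phi(x,y)|\le\int\bigl(|\partial_j^2F|+|\partial_jF|^2\bigr)e^F\,d\nu_y^G(\eta)\le 2\phi(x,y),
\end{equation*}
and the same computation with $\partial_\alpha$ in place of $\partial_j$ (using $|\alpha|^2=2$ and $\langle\alpha,x-\eta\rangle^2\le 2F^2$) yields $|\partial_\alpha^2\phi(x,y)|\le C\phi(x,y)$, which will be the input for the next step.

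For the difference-quotient bound I may assume $\langle\alpha,x\rangle>0$, the other sign reducing to this by applying the argument at $\sigma_\alpha.x$ and using $\partial_\alpha\phi(\sigma_\alpha.x,y)=-\partial_\alpha\phi(x,y)$ together with $\phi(\sigma_\alpha.x,y)=\phi(x,y)$. Since $\partial_\alpha\phi$ is odd under $\sigma_\alpha$ (another consequence of $G$-invariance of $\phi$), the fundamental theorem of calculus along the segment $s\mapsto x-s\alpha$, $s\in[0,\langle\alpha,x\rangle]$, gives
\begin{equation*}
2\partial_\alpha\phi(x,y)=\partial_\alpha\phi(x,y)-\partial_\alpha\phi(\sigma_\alpha.x,y)=\int_0^{\langle\alpha,x\rangle}\partial_\alpha^2\phi(x-s\alpha,y)\,ds.
\end{equation*}
Combined with the bound on $\partial_\alpha^2\phi$, matters reduce to the uniform comparison $\phi(x-s\alpha,y)\le C\phi(x,y)$ for $s\in[0,\langle\alpha,x\rangle]$. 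For this I would exploit convexity: for fixed $\eta$, $s\mapsto F(x-s\alpha,y,\eta)^2=F_1^2+2s^2-2s\langle\alpha,x-\eta\rangle$ (with $F_1:=F(x,y,\eta)$) is an upward parabola whose endpoint values on $[0,\langle\alpha,x\rangle]$ are $F_1^2$ and $F_1^2+2\langle\alpha,x\rangle\langle\alpha,\eta\rangle=F(x,y,\sigma_\alpha.\eta)^2=:F_2^2$. Hence $F(x-s\alpha,y,\eta)\le\max(F_1,F_2)$ and $e^{F(x-s\alpha,y,\eta)}\le e^{F_1}+e^{F_2}$; integrating against the $\sigma_\alpha$-invariant measure $\nu_y^G$ and using $\int e^{F_2}\,d\nu_y^G=\int e^{F_1}\,d\nu_y^G=\phi(x,y)$ delivers $\phi(x-s\alpha,y)\le 2\phi(x,y)$.

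The main obstacle is this final comparison. A naive appeal to Lemma~2.3 produces a factor $e^{c\langle\alpha,x\rangle}$ that degrades along the $\alpha$-direction, whereas $\phi$ itself grows only like $e^{c|x|}$. The convexity of the quadratic under the square root, combined with the $\sigma_\alpha$-invariance of $\nu_y^G$, is the key structural input forcing the constant to be independent of $\langle\alpha,x\rangle$.
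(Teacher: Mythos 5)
Your argument is correct, and after the common first step it diverges genuinely from the paper's. Both proofs begin by exploiting $G$-invariance to write $T_j\phi=\partial_j\phi$ and to reduce everything to the single estimate $|\partial_\alpha\phi(x,y)|\le C\,|\langle\alpha,x\rangle|\,\phi(x,y)$ (the paper reaches the same reduction by computing the integral representation of $\partial_j\phi$ at $x$ and at $\sigma_\alpha.x$ rather than by your chain-rule identity, but the resulting quantity is identical). From there the paper stays at the level of the integrand: it symmetrizes $\int\langle x-\eta,\alpha\rangle\,e^F/F\,d\nu_y^G$ under $\eta\mapsto\sigma_\alpha.\eta$, splits off an explicit $\tfrac12\langle x,\alpha\rangle|\alpha|^2\int e^F/F\,d\nu_y^G$ term, and controls the remaining antisymmetric piece by the mean value theorem applied to $s\mapsto e^s/s$ together with the Lipschitz bound $\bigl|\sqrt{1+A(x,y,\eta)^2}-\sqrt{1+A(\sigma_\alpha.x,y,\eta)^2}\bigr|\le C|\langle x,\alpha\rangle|$. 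You instead work at the level of $\phi$ itself: the oddness of $\partial_\alpha\phi$ under $\sigma_\alpha$ plus the fundamental theorem of calculus along the reflection segment gives $2\partial_\alpha\phi(x,y)=\int_0^{\langle\alpha,x\rangle}\partial_\alpha^2\phi(x-s\alpha,y)\,ds$, which you close with the pointwise bound $|\partial_\alpha^2\phi|\le C\phi$ and the uniform comparison $\phi(x-s\alpha,y)\le 2\phi(x,y)$ along the segment. That comparison is the one genuinely new ingredient, and your convexity argument for it is sound: the endpoint value of the parabola is exactly $F(x,y,\sigma_\alpha.\eta)^2$, so the $\sigma_\alpha$-invariance of $\nu_y^G$ recovers $\phi(x,y)$ with no loss, and you are right that the crude bound $\phi(x',y)\le\phi(x,y)e^{|x'^+-x^+|}$ of the paper's Lemma 2.2 would be fatally lossy here. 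What each approach buys: the paper's is self-contained at the integrand level and needs no comparison of $\phi$ at different base points; yours isolates the structure more cleanly (the difference quotient \emph{is} $\alpha_j\partial_\alpha\phi/\langle\alpha,x\rangle$, and the constant $2$ in the comparison is explicit), at the price of the extra segment lemma. One small point worth making explicit in a write-up is the interpretation of the quotient on the wall $\langle x,\alpha\rangle=0$; your FTC identity actually handles this limit automatically, which is a minor advantage over the paper's formulation.
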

 \begin{proof}
   Let us begin with proving that
   \begin{equation}\label{Tj}
   \left| \frac{T_j\phi(x,y)-T_j\phi(\sigma_\alpha.x,y)}{\langle x,\alpha\rangle}\right| \leq C \phi(x,y).\end{equation}
 Noting  here  that
 $$T_j\phi(x,y)=\frac{\partial \phi}{\partial x_j}(x,y), $$
 since  $\phi$ is $G$-invariant.  We  have
 \begin{eqnarray*}
\frac{\partial \phi}{\partial x_j}(x,y)&=&
  \int_{\mathbb{R}^d}\frac{x_j-\eta_j}{\sqrt{1+A(x,y,\eta)^2} }\;e^{\sqrt{1+A(x,y,\eta)^2}}d\nu_{y}^G(\eta)
 \end{eqnarray*}
  and
   \begin{eqnarray*}
 \frac{\partial \phi}{\partial x_j}(\sigma_\alpha.x,y)
&=&\int_{\mathbb{R}^d}\frac{\sigma_\alpha.x_j- \eta_j}{\sqrt{1+A(\sigma_\alpha.x,y, \eta)^2} }\;e^{\sqrt{1+A(\sigma_\alpha.x,y, \eta)^2}}d\nu_{y}^G(\eta)
\\ &=& \int_{\mathbb{R}^d}\frac{\sigma_\alpha.x_j- \sigma_\alpha.\eta_j}{\sqrt{1+A( x,y, \eta)^2} }\;e^{\sqrt{1+A( x,y, \eta)^2}}
d\nu_{y}^G(\eta).
\end{eqnarray*}
Using (\ref{sig}) we get  that
\begin{eqnarray*}
\dfrac{T_j\phi(x,y)-T_j\phi(\sigma_\alpha.x,y)}{\langle x,\alpha\rangle}
 = \frac{  \alpha_j}{\langle x,\alpha\rangle}
 \int_{\mathbb{R}^d} \langle x-\eta,\alpha\rangle\;\frac{e^{\sqrt{1+A(x,y,\eta)^2}} }{ \sqrt{1+A(x,y,\eta)^2} }\;d\nu_{y}^G(\eta).
\end{eqnarray*}
 As the measure $\nu_{y}^G $ is $G$- invariant,
$$
\int_{\mathbb{R}^d}\langle x-\eta,\alpha\rangle\; \frac{e^{\sqrt{1+A(x,y,\eta)^2}} }{ \sqrt{1+A(x,y,\eta)^2} }\;d\nu_{y}^G(\eta)=-
\int_{\mathbb{R}^d} \langle \sigma_\alpha.x-\eta,\alpha\rangle\;\frac{e^{\sqrt{1+A(\sigma_\alpha.x,y,\eta)^2}} }{ \sqrt{1+A(\sigma_\alpha.x,y,\eta)^2} }\;d\nu_{y}^G(\eta)
$$
and so we can write
\begin{eqnarray*}
&&\int_{\mathbb{R}^d} \langle x-\eta,\alpha\rangle\;\frac{e^{\sqrt{1+A(x,y,\eta)^2}} }{ \sqrt{1+A(x,y,\eta)^2} }\;d\nu_{y}^G(\eta)
\\&=& \frac{1}{2}\int_{\mathbb{R}^d} \langle x-\eta,\alpha\rangle\;\left\{\frac{e^{\sqrt{1+A(x,y,\eta)^2}} }{ \sqrt{1+A(x,y,\eta)^2} }
-\frac{e^{\sqrt{1+A(\sigma_\alpha.x,y,\eta)^2}} }{ \sqrt{1+A(\sigma_\alpha.x,y,\eta)^2} }\right\}\;d\nu_{y}^G(\eta)
\\&&+\frac{1}{2} \langle x, \alpha\rangle |\alpha|^2\;\int_{\mathbb{R}^d}  \;\frac{e^{\sqrt{1+A(x,y,\eta)^2}} }{ \sqrt{1+A(x,y,\eta)^2} }\;d\nu_{y}^G(\eta)
\\&=& I_1(x,y)+I_2(x,y)
\end{eqnarray*}
Clearly
\begin{equation}\label{I1}
  |I_2(x,y)|\leq C|\langle x, \alpha\rangle|\phi(x,y).
\end{equation}
To obtain the same bound for $I_1(x,y)$ we proceed as the following: applying the mean value Theorem on the function
$s\rightarrow e^s/s$ for $s>1$ which is increasing function,  we get  that
\begin{eqnarray*}
&& \left|\frac{e^{\sqrt{1+A(x,y,\eta)^2}} }{ \sqrt{1+A(x,y,\eta)^2} }
-\frac{e^{\sqrt{1+A(\sigma_\alpha.x,y,\eta)^2}} }{ \sqrt{1+A(\sigma_\alpha.x,y,\eta)^2} }\right|
\\&&\qquad\qquad\qquad\qquad\leq |\sqrt{1+A(x,y,\eta)^2}-\sqrt{1+A(\sigma_\alpha.x,y,\eta)^2}|
\\&&\qquad\qquad\qquad\qquad\qquad\qquad\times
   \frac{e^{ \max (\;\sqrt{1+A(x,y,\eta)^2},\sqrt{1+A(\sigma_\alpha.x,y,\eta)^2}\;)} }
   {\max (\sqrt{1+A(x,y,\eta)^2},\sqrt{1+A(\sigma_\alpha.x,y,\eta)^2})}
    \end{eqnarray*}
 We  have
 \begin{eqnarray*}
   |\sqrt{1+A(x,y,\eta)^2}-\sqrt{1+A(\sigma_\alpha.x,y,\eta)^2}|=
   \frac{||x-\eta|^2-|\sigma_\alpha x-\eta|^2|}{\sqrt{1+A(x,y,\eta)^2}+\sqrt{1+A(\sigma_\alpha.x,y,\eta)^2}}
 \end{eqnarray*}
As
\begin{eqnarray*}
  ||x-\eta|^2-|\sigma_\alpha x-\eta|^2|&=&|\langle x-\sigma_\alpha.x, x+\sigma_\alpha.x-2\eta\rangle|
\\& =& |\langle x,\alpha \rangle||\langle  \alpha, x- \eta+ \sigma_\alpha.x-\eta\rangle|
\\& \leq&  C |\langle x,\alpha \rangle|\Big(| x- \eta|+|\sigma_\alpha.x-\eta| \Big)
\end{eqnarray*}
and
$$| x- \eta|+|\sigma_\alpha.x-\eta|\leq \Big(\sqrt{1+A(x,y,\eta)^2}+\sqrt{1+A(\sigma_\alpha.x,y,\eta)^2}\Big)$$

It follows that
\begin{equation}\label{111}
  |\sqrt{1+A(x,y,\eta)^2}-\sqrt{1+A(\sigma_\alpha.x,y,\eta)^2}|\leq C |\langle x,\alpha\rangle|
\end{equation}
However, since
   $$|x-\eta |\leq \max (\sqrt{1+A(x,y,\eta)^2},\sqrt{1+A(\sigma_\alpha.x,y,\eta)^2}) $$
 hence we obtain
\begin{equation}\label{222}
 |x-\eta | \left|\frac{e^{\sqrt{1+A(x,y,\eta)^2}} }{ \sqrt{1+A(x,y,\eta)^2} }
-\frac{e^{\sqrt{1+A(\sigma_\alpha.x,y,\eta)^2}} }{ \sqrt{1+A(\sigma_\alpha.x,y,\eta)^2} }\right|\leq
C\Big(e^{\sqrt{1+A(x,y,\eta)^2}}+e^{\sqrt{1+A(\sigma_\alpha.x,y,\eta)^2}}\Big)
\end{equation}
Therefore combine (\ref{111}) with (\ref{222}) and using the $G$-invariance of the measure $d\nu_y^G$ we obtain that
$$| I_1(x,y)|\leq C \phi(x,y)$$
\par We come now to the estimate of  $T_j^2 \phi (x,y)$. We  write
\begin{eqnarray*}
T_j^2 \phi (x,y)=\qquad\qquad\qquad\qquad\qquad\qquad\qquad\qquad\qquad\qquad
\end{eqnarray*}
\begin{eqnarray*}
&& \int_{\mathbb{R}^d}\frac{(x_j-\eta_j)^2}{1+A(x,y,\eta)^2}\; e^{\sqrt{1+A(x,y,\eta)^2}}d\nu^G_y+
\int_{\mathbb{R}^d}\frac{1}{\sqrt{1+A(x,y,\eta)^2}}\; e^{\sqrt{1+A(x,y,\eta)^2}}d\nu^G_y
\\&&\qquad\qquad\qquad-\int_{\mathbb{R}^d}\frac{(x_j-\eta_j)^2}{(1+A(x,y,\eta)^2)^{3/2}}e^{\sqrt{1+A(x,y,\eta)^2}}d\nu^G_y
\\&&\qquad\qquad\qquad\qquad\qquad\qquad\qquad+\sum_{\alpha\in R^+}k(\alpha)\alpha_j\left\{
\dfrac{T_j\phi(x,y)-T_j\phi(\sigma_\alpha.x,y)}{\langle x,\alpha\rangle}\right\}d\nu^G_y.
\end{eqnarray*}
 Clearly each integral can be  estimate  by $C \phi (x,y)$. This finishes the proof of the lemma.
\end{proof}

  \begin{lem}\label{555}
   There exists $C>0$ such that for $y\in \mathbb{R}^d$ and real valued  function  $f\in L^2(\mathbb{R}^2,\phi(x,y)d\mu_k(x))$  with
   $T_jf\in  L^2(\mathbb{R}^2, d\mu_k(x))$   we have
   \begin{equation}\label{124}
  \left| \int_{\mathbb{R}^d} T_jf(x) f(x)T_j\phi(x,y) d\mu_k(x) \right|\leq C \int_{\mathbb{R}^d}|f(x)|^2 \phi(x,y) d\mu_k(x).
\end{equation}
     \end{lem}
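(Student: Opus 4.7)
The strategy is to exploit the algebraic identity, valid for any real-valued $f$ to which $T_j$ can be applied,
$$
2\,f(x)\,T_jf(x) \;=\; T_j(f^2)(x) \;+\; \sum_{\alpha\in R^+}k(\alpha)\,\alpha_j\,\frac{(f(x)-f(\sigma_\alpha.x))^2}{\langle\alpha,x\rangle},
$$
which follows by a direct computation from the definition (\ref{DO}) of $T_j$ after comparing $T_j(f^2)(x)=2f(x)\partial_jf(x)+\sum_\alpha k(\alpha)\alpha_j\frac{f(x)^2-f(\sigma_\alpha.x)^2}{\langle\alpha,x\rangle}$ with $2f(x)T_jf(x)$. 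Multiplying by $T_j\phi(x,y)$ and integrating against $d\mu_k(x)$ splits $2\int (T_jf)\,f\,T_j\phi\,d\mu_k$ into two pieces, which I bound separately.

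For the first piece $\int T_j(f^2)(x)\,T_j\phi(x,y)\,d\mu_k(x)$, I would use the antisymmetry of $T_j$ with respect to $d\mu_k$ to shift $T_j$ onto $T_j\phi$, obtaining $-\int f^2\,T_j^2\phi\,d\mu_k$. By Lemma \ref{123} we have $|T_j^2\phi(x,y)|\leq C\phi(x,y)$, so this term is bounded by $C\int f^2\,\phi\,d\mu_k$.

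For each of the finitely many terms indexed by $\alpha\in R^+$ in the second piece, the plan is to symmetrize by changing variables $x\mapsto\sigma_\alpha.x$, using the $\sigma_\alpha$-invariance of $d\mu_k$ and the identity $\langle\alpha,\sigma_\alpha.x\rangle=-\langle\alpha,x\rangle$. Averaging the original integral with the transformed one converts the integrand into
$$
\tfrac{1}{2}\,k(\alpha)\,\alpha_j\,\frac{T_j\phi(x,y)-T_j\phi(\sigma_\alpha.x,y)}{\langle\alpha,x\rangle}\,\bigl(f(x)-f(\sigma_\alpha.x)\bigr)^{2},
$$
and Lemma \ref{123} provides $\bigl|\frac{T_j\phi(x,y)-T_j\phi(\sigma_\alpha.x,y)}{\langle\alpha,x\rangle}\bigr|\leq C\phi(x,y)$. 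Combining this with the pointwise bound $(f(x)-f(\sigma_\alpha.x))^2\leq 2f(x)^2+2f(\sigma_\alpha.x)^2$ and the $G$-invariance of $\phi(\cdot,y)$ (which lets me replace $\int\phi(x,y)f(\sigma_\alpha.x)^2 d\mu_k$ by $\int\phi(x,y)f(x)^2 d\mu_k$ after another change of variable) yields a bound of $C\int f^2\,\phi\,d\mu_k$ for each $\alpha$, and hence for the sum.

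The main obstacle is the rigorous justification of the pointwise identity and the integration by parts under the weak regularity hypothesis $f\in L^2(\phi\,d\mu_k)$ with $T_jf\in L^2(d\mu_k)$: one should first establish (\ref{124}) for $f\in C_c^\infty(\mathbb{R}^d)$ where all the manipulations are legitimate, and then pass to the limit through a density argument, the uniformity of the estimate above guaranteeing that the limit holds for the general class in the statement.
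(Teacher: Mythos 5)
Your proposal is correct and follows essentially the same route as the paper: the pointwise product identity for $T_j(f^2)$ combined with antisymmetry of $T_j$ to produce the $-\int f^2\,T_j^2\phi\,d\mu_k$ term, symmetrization of the difference-quotient term via $x\mapsto\sigma_\alpha.x$, and then Lemma \ref{123}. If anything you are more explicit than the paper about the final step $(f(x)-f(\sigma_\alpha.x))^2\leq 2f(x)^2+2f(\sigma_\alpha.x)^2$ together with the $G$-invariance of $\phi(\cdot,y)$, and about the density argument justifying the manipulations, both of which the paper leaves implicit.
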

  \begin{proof}
  From (\ref{DO}) it follows that
  \begin{eqnarray*}
      && 2\int_{\mathbb{R}^d} T_jf(x) f(x) T_j\phi(x,y) d\mu_k(x) = -\int_{\mathbb{R}^d}  f^2(x)T_j^2\phi(x,y) d\mu_k(x)\\
       && \qquad\qquad\qquad\qquad\qquad- \sum_{\alpha\in R^+}k(\alpha)\alpha_j\int_{\mathbb{R}^d} \frac{\Big(f(x)-f(\sigma_\alpha.x)\Big)^2}{\langle x,\alpha\rangle}T_j\phi(x,y) d\mu_k(x).
 \end{eqnarray*}
We write  the second integral as
 \begin{eqnarray*}
   &&\int_{\mathbb{R}^d} \frac{\Big(f(x)-f(\sigma_\alpha.x)\Big)^2}{\langle x,\alpha\rangle}T_j\phi(x,y) d\mu_k
 \\ &&= \int_{\mathbb{R}^d}  \Big(f(x)-f(\sigma_\alpha.x)\Big)^2  \frac{\Big(T_j\phi(x,y)-T_j\phi(\sigma_\alpha.x,y)\Big)}{2\langle x,\alpha\rangle} d\mu_k
 \end{eqnarray*}
Thus  (\ref{124}) follows  from Lemma \ref{123}.
 \end{proof}
\par We come now to the proof of Theorem \ref{101}
\begin{proof}[Proof of Theorem \ref{101}]
\par  Fixing $y \in \mathbb{R}^d$.  Let $Q$ be the quadratic form associated to the infinitesimal generator of $W_t$ considered on the Hilbert $L^2( \phi(x,y)d\mu_k(x))$.  We have
  \begin{eqnarray*}
  Q(f,g)&=& \sum_{j=1}^d \int_{\mathbb{R}^d}T_jf(x)\overline{T_jg(x)} \phi(x,y) d\mu_k(x) + \int_{\mathbb{R}^d} T_jf(x) \overline{g(x)}T_j\phi(x,y) d\mu_k(x)
  \\&&\qquad\qquad\qquad\qquad+ \int_{\mathbb{R}^d}f(x)\overline{g(x)}\phi(x,y)V(x)d\mu_k(x)
  \end{eqnarray*}
  $$D(Q)=\{f;\; f, V^{1/2}f, T_jf\in L^2(\mathbb{R}^d, \phi(x,y)d\mu_k(x)),\; j=1,...,d\}$$
Define
$$\|f\|_Q^2= \int_{\mathbb{R}^d}\left\{\sum_{j=1}^d |T_jf(x)|^2+|f(x)|^2V(x)+|f(x)|^2\right\}\phi(x,y)d\mu_k(x) $$
 Using Lemma \ref{555} we have that
 $$  |Q(f,f)|\leq  C_1 \|f\|_Q^2, \quad \text{and }\quad \|f\|_Q^2\leq C_2|Q(f,f)|+C_3\|f\|^2_{L^2( \phi(x,y)d\mu_k)}$$
 with $C_1, C_2$ and $C_3$ independent of $y$ and $V$.
 According to the corollary  (\ref{666})  and  from the holomorphic semigroups theory   (see e.g \cite[th 3.1]{Ro} we have
 \begin{equation}\label{777}
   \|L_kW_t(g)\|_{L^2(\phi(x,y)d\mu_k(x))}\leq ct^{-1}e^{ct}\|g\|_{L^2(\phi(x,y)d\mu_k(x))}.
\end{equation}
 Setting $ g(x) = W_{1/2}(x, y)$  and $f (x) =W_{1/2}(g)(x) = W_1(x, y) $.  We get from the boundedness of $W_t$ and (\ref{777})
 \begin{eqnarray*}
   &&\| T_j(W_1(x,y))\|_{L^2(\phi(x,y)d\mu_k(x))}\qquad\qquad\qquad\qquad\qquad\qquad\qquad\qquad\qquad\qquad\qquad
   \\&& \qquad\qquad\qquad\leq  \|f\|_Q^2\leq   C_2|Q(f,f)|+C_3\|f\|^2_{L^2( \phi(x,y)d\mu_k(x))}
   \\&&\qquad\qquad\qquad\leq   C\|  L_k f\|_{L^2(\phi(x,y)d\mu_k)} \|f\|_{L^2( \phi(x,y)d\mu_k)}+ C_3\|f\|^2_{L^2( \phi(x,y)d\mu_k)}
   \\&& \qquad\qquad\qquad\leq C'\|g\|^2_{L^2( \phi(x,y)d\mu_k(x))}\leq C'',
 \end{eqnarray*}
 with $C''$ independent of $y$ and $V$. Now   the desired estimate (\ref{01}) follows  from the fact that    $W_t(x,y)=t^{-d/2-\gamma_k} \tilde{W}_1(x/\sqrt{t},y\sqrt{t})$ where
$\tilde{W}_s$  is the semigroup generated by  $A_k-tV(\sqrt{t}x)$ ( this can be seen using Trotter formula).
\par To proving (\ref{02}) we proceed as follows. Observe that for all $x,y\in \mathbb{R}^d$
\begin{eqnarray*}
 1&=&\left\{\int_{\mathbb{R}^d} e^{\frac{1}{4}\sqrt{1+A(x,y,\eta)^2}} e^{-\frac{1}{4}\sqrt{1+A(x,y,\eta)^2}}d\nu_y^G(\eta)\right\}^2
 \\&\leq&
 \int_{\mathbb{R}^d} e^{\frac{1}{2} \sqrt{1+A(x,y,\eta)^2}}  d\nu_y^G(\eta)\int_{\mathbb{R}^d}  e^{ -\frac{1}{2}\sqrt{1+A(x,y,\eta)^2}}  d\nu_y^G(\eta)
 \end{eqnarray*}
Then  Cauchy-Schwartz inequality and (\ref{nu}) yield
\begin{eqnarray*}
&&\int_{ |x^+-y^+|>\sqrt{t}}|T_jW_s(x,y)| d\mu_k(x)\leq \left\{\int_{\mathbb{R}^d}|T_jW_t(x,y)|^2\phi(x/\sqrt{s},y/\sqrt{s})d\mu_k(x)\right\}^{1/2}
\\ &&\times \left\{\int_{ |x^+-y^+|>\sqrt{t}} \left(\int_{\mathbb{R}^d}  e^{ -\sqrt{1+A(x ,y ,\eta)^2/s}} \; d\nu_{y}^G(\eta)\right) d\mu_k(x)\right\}^{1/2}
\end{eqnarray*}
   We have
   \begin{eqnarray*}
   &&\int_{ |x^+-y^+|>\sqrt{t}} \left(\int_{\mathbb{R}^d}  e^{ -\sqrt{1+A(x ,y ,\eta)^2/s}}  d\nu_{y}^G(\eta)\right) d\mu_k(x)
  \\ &&\qquad\qquad\qquad\qquad\leq \int_{ |x^+-y^+|>\sqrt{t}} \left(\int_{\mathbb{R}^d}  e^{ - A(x ,y ,\eta)/\sqrt{s}}\;  d\nu_{y}^G(\eta)\right) d\mu_k(x)
  \\&&\qquad\qquad\qquad\qquad \leq \int_{ \mathbb{R}^d} \left(\int_{\mathbb{R}^d}  e^{ - A(x ,y ,\eta)/\sqrt{s}} \mathds{1}_{A(x,y,\eta)>\sqrt{t}}  \; d\nu_{y}^G(\eta)\right) d\mu_k(x)
  \\&&\qquad\qquad\qquad\qquad= \leq \int_{ \mathbb{R}^d}  \tau_{-y}\Big( e^{-|.|/\sqrt{s}} \mathds{1}_{|.|>\sqrt{t}}\Big)(x)   d\mu_k(x)
  \\ &&\qquad\qquad\qquad\qquad= \int_{ |x|>\sqrt{t}}   e^{-|x|/\sqrt{s}}     d\mu_k(x)\leq s^{\delta_k+d}e^{-\frac{1}{2}\sqrt{t/s}}.
 \end{eqnarray*}
 This concludes  (\ref{02}).
    \end{proof}
\section{$L^p$ smoothing for the Dunk- Schr\"{o}dinger operators semigroup}
We recall the definition of the classical Kato class of potential $V$, see \cite{Sim1}
 \begin{defn}

 A measurable function  $V$  on $\mathbb{R}^d$ belongs to the Kato class $ \mathbb{K}_d$ if
      \begin{eqnarray*}
  && \lim_{t\downarrow 0}\left[\sup_{x}\int_{ |x -y|\leq t}  |x-y|^{2-d} |V(y)| dy\right]=0 \quad\text{when}\quad d\geq 3
\\&& \lim_{t\downarrow 0}\left[\sup_{x}\int_{ |x-y|\leq t} \ln\{ |x-y|^{-1}\}  |V(y)| dy\right]=0 \quad\text{when}\quad d=2
\\&&\sup_{x}\int_{ |x-y|\leq1}   |V(y)| dy< \infty \quad\text{when}\quad d=1
      \end{eqnarray*}
              \end{defn}

\begin{prop}\label{prop1}

      A measurable function  $V$  on $\mathbb{R}^d$ belongs to the Kato class $ \mathbb{K}_d$ if and only if
      \begin{eqnarray*}
  && \lim_{t\downarrow 0}\left[\sup_{x}\int_{ |x^+-y^+|\leq t}  |x^+-y^+|^{2-d} |V(y)| dy\right]=0 \quad\text{when}\quad d\geq 3
\\&& \lim_{t\downarrow 0}\left[\sup_{x}\int_{ |x^+-y^+|\leq t} \ln\{ |x^+-y^+|^{-1}\}  |V(y)| dy\right]=0 \quad\text{when}\quad d=2
\\&&\sup_{x}\int_{ |x^+-y^+|\leq1}   |V(y)| dy< \infty \quad\text{when}\quad d=1.
      \end{eqnarray*}
              \end{prop}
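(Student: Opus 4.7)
The plan is to exploit the identity $|x^+-y^+| = \min_{g \in G}|g.x - y|$ from (\ref{12}) to move back and forth between the two formulations. One implication is essentially immediate: since $|x^+-y^+| \leq |x-y|$ for every $x,y$, one has $\{y : |x-y| \leq t\} \subseteq \{y : |x^+-y^+| \leq t\}$, and on the smaller set the standard Kato weight is pointwise dominated by the $x^+$-weight (when $d \geq 3$, $|x-y|^{2-d} \leq |x^+-y^+|^{2-d}$ because $2-d \leq 0$; when $d=2$, $\ln|x-y|^{-1} \leq \ln|x^+-y^+|^{-1}$ once $t<1$; when $d=1$ there is no weight to compare). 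So the ``$x^+$-version'' trivially implies membership in $\mathbb{K}_d$.

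For the converse, which is the interesting direction, I would partition the sublevel set according to which $g$ realises the minimum in (\ref{12}):
$$\{y : |x^+-y^+| \leq t\} = \bigsqcup_{g \in G} E_g,$$
where $E_g$ collects those $y$ for which $g$ is the first element of $G$ (in a fixed enumeration) attaining $|x^+-y^+|=|g.x-y|$. On $E_g$ one has $|x^+-y^+| = |g.x-y|$ and $E_g \subseteq \{y : |g.x-y|\leq t\}$, so for $d \geq 3$
$$\int_{|x^+-y^+|\leq t} |x^+-y^+|^{2-d}|V(y)|\,dy \;\leq\; \sum_{g\in G}\int_{|g.x-y|\leq t}|g.x-y|^{2-d}|V(y)|\,dy.$$

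Each summand is then transformed by the orthogonal change of variables $y = g.z$ (which preserves Lebesgue measure and satisfies $|g.x-y|=|x-z|$), giving $\int_{|x-z|\leq t}|x-z|^{2-d}|V(g.z)|\,dz$. Taking $\sup_x$ and relabelling $x \mapsto g^{-1}.x$ (again a bijection of $\mathbb{R}^d$) identifies this supremum with the standard Kato supremum $\sup_x\int_{|x-y|\leq t}|x-y|^{2-d}|V(y)|\,dy$, so the full sum is bounded by $|G|$ times the Kato quantity, which tends to $0$ as $t\downarrow 0$. The cases $d=2$ and $d=1$ follow identically, with $|\cdot|^{2-d}$ replaced by $\ln|\cdot|^{-1}$ or by $1$ respectively; for $d=2$ one needs only to note that $\ln|g.x-y|^{-1}$ is positive whenever $|g.x-y|\leq t<1$, so the replacement of $\mathbf{1}_{E_g}\ln|x^+-y^+|^{-1}$ by $\mathbf{1}_{\{|g.x-y|\leq t\}}\ln|g.x-y|^{-1}$ is a genuine domination.

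The argument is thus essentially a group-averaging trick dressed up by a change of variables; I do not anticipate any analytic obstacle. The only minor point is the measurability of the partition $\{E_g\}$, which is standard because the maps $y \mapsto |g.x-y|$ are continuous, so the ``first index achieving the minimum'' rule produces Borel sets.
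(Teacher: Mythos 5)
Your proof is correct and follows essentially the same route as the paper: the easy direction via $|x^+-y^+|\le|x-y|$ and monotonicity of the weight, and the converse by splitting $\{|x^+-y^+|\le t\}$ over the group so that the quantity is dominated by $|G|$ copies of the classical Kato supremum. The only cosmetic difference is that the paper partitions by Weyl chambers $g(C)$ (on which $y^+=g^{-1}.y$, so $|x^+-y^+|=|g(x^+)-y|$) rather than by the index $g$ attaining the minimum in $|x^+-y^+|=\min_{g}|g.x-y|$; both reduce to the same bound after taking the supremum in $x$.
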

\begin{proof}
   Remember here that
\begin{equation}\label{12}
  |x^+-y^+|= \min_{g\in G}|g.x-y|
\end{equation}
and
\begin{equation}\label{gb}
  \bigcup_{g\in G}B(g.x,t)=\{y\in \mathbb{R}^d,\; |x^+-y^+|\leq t \;\}.
  \end{equation}
where  $B(x,r)=\{x\in \mathbb{R}^d;\; |x-y|\leq r\}$    the closed ball of centre $x$ and radius  $r>0$.
Clearly we have
\begin{eqnarray*}
   \int_{ |x-y|\leq t}  |x-y|^{2-d} |V(y)| dy&\leq& \int_{ |x^+-y^+|\leq t}  |x^+-y^+|^{2-d} |V(y)| dy  \quad\text{for}\; d\geq 3
\\ \int_{ |x-y|\leq t}   \ln\{|x-y|^{-1}\} |V(y)| dy&\leq& \int_{ |x^+-y^+|\leq t}  \ln\{|x^+-y^+|^{-1}\}  |V(y)| dy  \quad\text{for}\; d=2
\\\int_{ |x-y|\leq t}   |V(y)| dy&\leq& \int_{ |x^+-y^+|\leq t}    |V(y)| dy  \quad\text{for}\; d=1,
\end{eqnarray*}
which implies the if part.  Using (\ref{gb}) we have for $d\geq3$
\begin{eqnarray*}
   \int_{ |x^+-y^+|\leq t}  |x^+-y^+|^{2-d} |V(y)| dy&=& \sum_{g\in G} \int_{\{ y\in g(C); |x^+-y^+|\leq t \}}  |x^+-y^+|^{2-d} |V(y)| dy
   \\ &=&\sum_{g\in G} \int_{\{ y\in g(C); |x^+-g^{-1}(y)|\leq t \}}  |x^+-g^{-1}(y)|^{2-d} |V(y)| dy
   \\ &=&\sum_{g\in G} \int_{ \{y\in g(C); |g(x^+)-y|\leq t\} }  |g(x^+)-y|^{2-d} |V(y)| dy
   \\&\leq & \sum_{g\in G} \int_{ |g(x^+)-y|\leq t }  |g(x^+)-y|^{2-d} |V(y)| dy.
\end{eqnarray*}
Similarly we have
\begin{eqnarray*}
  \int_{ |x^+-y^+|\leq t}   \ln\{|x^+-y^+|^{-1}\} |V(y)| dy&\leq& \sum_{g\in G} \int_{ |g(x^+)-y|\leq t } \ln\{|g(x^+)-y|^{-1}\}  |V(y)| dy  \quad\text{for}\; d=2
\\ \int_{ |x^+-y^+|\leq t}    |V(y)| dy&\leq& \sum_{g\in G} \int_{ |g(x^+)-y|\leq t } |V(y)| dy  \quad\text{for}\; d=1.
\end{eqnarray*}
This achieve  the converse  part and conclude the proposition \ref{prop1}.
\end{proof}
\begin{cor}\label{1}
If $V\in \mathbb{K}_d$ then
  $$\sup_{x\in \mathbb{R}^d} \int_{|x^+-y^+|\leq 1}| V(y)|dy< \infty.$$
\end{cor}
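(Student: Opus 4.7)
The plan is to reduce the corollary to the classical fact that any $V$ in the standard Kato class satisfies the uniform local integrability bound $M:=\sup_{x\in\mathbb{R}^d}\int_{|x-y|\leq 1}|V(y)|\,dy<\infty$, and then transfer this bound to the $|x^+-y^+|$ version by the same orbit decomposition used in the proof of Proposition \ref{prop1}.

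First I would establish the classical bound. For $d\geq 3$, the defining limit lets us choose $\delta\in(0,1]$ with $\sup_x\int_{|x-y|\leq\delta}|x-y|^{2-d}|V(y)|\,dy\leq 1$. Since $|x-y|^{2-d}\geq\delta^{2-d}$ on the region of integration, this gives $\sup_x\int_{|x-y|\leq\delta}|V(y)|\,dy\leq \delta^{d-2}$. Then any unit ball $B(x,1)$ can be covered by a number $N=N(d,\delta)$ of balls of radius $\delta$, and summing yields $M\leq N\delta^{d-2}<\infty$. The $d=2$ case is analogous using the fact that $\ln\{|x-y|^{-1}\}\geq \ln(1/\delta)$ on $|x-y|\leq\delta$ for $\delta<1$, and $d=1$ is immediate from the definition.

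Next I would invoke the identity (\ref{gb}), namely $\{y:|x^+-y^+|\leq 1\}=\bigcup_{g\in G}B(g.x,1)$, to dominate
\[
\int_{|x^+-y^+|\leq 1}|V(y)|\,dy\;\leq\;\sum_{g\in G}\int_{|g.x-y|\leq 1}|V(y)|\,dy\;\leq\;|G|\,M,
\]
and taking the supremum over $x\in\mathbb{R}^d$ finishes the proof, since the right-hand side is independent of $x$.

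The only mildly nontrivial step is the first one, extracting the uniform $L^1$-bound from the Kato condition; this is standard in the Euclidean setting but does need to be written down. Once that classical fact is in hand, the transfer to the $|x^+-y^+|$ form is purely cosmetic and is exactly the argument already carried out in Proposition \ref{prop1}.
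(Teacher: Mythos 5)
Your proof is correct and follows essentially the same route as the paper: the paper likewise reduces to the bound $\int_{|x^{+}-y^{+}|\leq 1}|V(y)|\,dy\leq\sum_{g\in G}\int_{|g.x-y|\leq 1}|V(y)|\,dy$ and then invokes the classical Euclidean fact, citing Lemma~4.4 of \cite{Sim1} instead of proving it. Your only addition is to write out the standard covering argument establishing that classical fact, which is fine but not a different approach.
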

 This can be seen  from \cite[ Lemma 4.4]{Sim1} and the fact that
$$  \int_{|x^+-y^+|\leq 1}| V(y)|dy\leq \sum_{g\in G}   \int_{|y-gx|\leq 1}| V(y)|dy.$$
In what follows we need  the following  lemma.
\begin{lem}
There exists a constant $c>0$ such that for all $x\in \mathbb{R}^d$ and $r>0$  the ball $B(x,r)$ can be covered  by $N$ balls all of radius $1$  where
  $N\leq c(r+1)^d$.
\end{lem}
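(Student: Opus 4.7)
The plan is to use a standard maximal packing argument from Euclidean geometry. The result has nothing to do with the Dunkl weight: it is a purely metric statement about the Lebesgue measure of Euclidean balls, so all I need is the scaling $\mathrm{vol}(B(x,\rho))=\kappa_d\rho^d$ for some dimensional constant $\kappa_d>0$.

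First I would choose a maximal $1$-separated subset $\{y_1,\dots,y_N\}$ of $B(x,r)$, i.e.\ a maximal finite collection of points in $B(x,r)$ such that $|y_i-y_j|\geq 1$ for all $i\neq j$ (maximality guaranteed by a greedy selection and the fact that $B(x,r)$ is bounded). Two properties are then immediate:
\begin{itemize}
\item[(a)] By maximality, for every $z\in B(x,r)$ there exists some $y_i$ with $|z-y_i|<1$; otherwise $\{y_1,\dots,y_N,z\}$ would still be $1$-separated, contradicting maximality. Hence $B(x,r)\subset\bigcup_{i=1}^N B(y_i,1)$, so the $N$ unit balls $B(y_i,1)$ cover $B(x,r)$.
\item[(b)] The open balls $B(y_i,1/2)$ are pairwise disjoint and contained in $B(x,r+1/2)$.
\end{itemize}

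Next I would count by volume comparison applied to (b):
\begin{equation*}
N\,\kappa_d\,(1/2)^d\;=\;\sum_{i=1}^N \mathrm{vol}\bigl(B(y_i,1/2)\bigr)\;\leq\;\mathrm{vol}\bigl(B(x,r+1/2)\bigr)\;=\;\kappa_d\,(r+1/2)^d,
\end{equation*}
which gives $N\leq (2r+1)^d\leq 2^d(r+1)^d$. Setting $c=2^d$ yields the desired bound $N\leq c(r+1)^d$.

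I do not expect any genuine obstacle in this proof: the argument is classical and entirely metric. The only minor point worth being careful about is the open/closed convention for the balls used in (a) and (b), but this is cosmetic (one may replace $B(y_i,1)$ by its closure without changing the covering property, and the disjointness in (b) is clear since $1$-separation of centres forces the radius-$1/2$ balls to be disjoint).
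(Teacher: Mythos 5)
Your proof is correct, but it takes a genuinely different route from the paper. You use the classical maximal $1$-separated (packing) argument: disjointness of the half-radius balls inside $B(x,r+1/2)$ plus the Euclidean volume scaling $\mathrm{vol}(B(x,\rho))=\kappa_d\rho^d$ gives the count, and maximality gives the covering. The paper instead constructs the covering explicitly: it lays down a rectangular grid of centres $a^{j_1,\dots,j_d}$ with coordinate spacing $2/d$ over the cube containing $B(x,r)$, so that every $y$ with $|x_i-y_i|\leq r$ lies within $1/d$ of a grid point in each coordinate and hence (via $|a-y|\leq d\max_i|a_i^{j_i}-y_i|$) within distance $1$ of some grid point; counting the grid points gives $N\leq 2^d([rd/2]+1)^d\leq (2d)^d(r+1)^d$. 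Both arguments are elementary and purely metric, exactly as you observe. Your packing argument is shorter, yields the better constant $c=2^d$ rather than $(2d)^d$, and generalizes verbatim to any doubling metric measure space, whereas the paper's grid construction has the minor virtue of exhibiting the covering centres explicitly. Since the lemma is only used to prove the growth bound in Corollary \ref{15}, the precise constant is irrelevant, and either proof suffices.
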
\label{ball}
\begin{proof}
 Observe that $|x-y|\leq r$ implies that for all $i=1,2,.,.,.,d$
$$|x_i-y_i|\leq r\leq  \frac{2([rd/2]+1)}{d}.$$
  Let $x=(x_1,x_2,.,.,x_d)$. Putting  for  $ j=1,.,.,., 2([rd/2]+1)$
  $$a_i^j=x_i-\frac{2([rd/2]+1)}{d}+\frac{2j-1}{d}.$$
   Clearly if $|x_i-y_i|\leq r$ then there exists  $j_i$ such that $|a_i^{j_i}-y_i|\leq  1/d$
 and if we let
  $a^{j_1,.,.,.,j_d}= (a_1^{j_1},.,.,a_d^{j_d})$ then we have
  $$|a^{j_1,.,.,.,j_d}-y|\leq d\max_i|a_i^{j_i}-y_i|\leq 1.$$
  The number of the balls of center $a^{j_1,.,.,.,j_d}$ and of radius 1 is  $2^d([rd/2]+1)^d$ which is less than  $(2d)^d(r+1)^d$.
\end{proof}
As a  consequence we have the following result
\begin{cor}\label{15} If $V\in \mathbb{K}_d$ then there exists $C>0$ such that for $r>0$
  $$ \int_{ |x^+-y^+|\leq  r}|V(y)| dy \leq  C (r+1)^d.$$
\end{cor}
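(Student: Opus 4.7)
The plan is to reduce the integral over the $G$-symmetric region $\{y:|x^+-y^+|\le r\}$ to a finite sum of integrals over Euclidean balls of radius~$1$, and then apply the uniform bound provided by Corollary~\ref{1}.

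First, I would use the identity (\ref{gb}), namely $\{y\in\mathbb{R}^d:|x^+-y^+|\le r\}=\bigcup_{g\in G}B(g.x,r)$, to write
\[
\int_{|x^+-y^+|\le r}|V(y)|\,dy\;\le\;\sum_{g\in G}\int_{B(g.x,r)}|V(y)|\,dy.
\]
Since $|G|$ is a fixed finite number, it suffices to bound each term by $C(r+1)^d$ with a constant independent of $x$ and $g$.

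Next, I would invoke the covering lemma just proved (the unnumbered lemma preceding this corollary): each Euclidean ball $B(g.x,r)$ can be covered by at most $N\le c(r+1)^d$ balls $B(z_1,1),\dots,B(z_N,1)$ of radius $1$. Hence
\[
\int_{B(g.x,r)}|V(y)|\,dy\;\le\;\sum_{j=1}^{N}\int_{B(z_j,1)}|V(y)|\,dy.
\]
The key observation is that for any $z\in\mathbb{R}^d$ one has $B(z,1)\subset\{y:|z^+-y^+|\le 1\}$, because $|z^+-y^+|=\min_{h\in G}|h.z-y|\le|z-y|$ (take $h$ to be the identity). Consequently, by Corollary~\ref{1}, there is a constant $M>0$, independent of $z$, such that
\[
\int_{B(z,1)}|V(y)|\,dy\;\le\;\int_{|z^+-y^+|\le 1}|V(y)|\,dy\;\le\;M.
\]

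Putting these together yields
\[
\int_{|x^+-y^+|\le r}|V(y)|\,dy\;\le\;|G|\cdot N\cdot M\;\le\;|G|\,c\,M\,(r+1)^d,
\]
which is the desired inequality with $C=|G|\,c\,M$. The only step that requires genuine care is the covering inclusion $B(z,1)\subset\{|z^+-y^+|\le 1\}$, but this is immediate from formula (\ref{12}); I do not expect any further obstacle, since the covering lemma and Corollary~\ref{1} furnish everything else directly.
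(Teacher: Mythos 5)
Your proof is correct and follows exactly the route the paper intends: the paper states this corollary as an immediate consequence of the covering lemma and Corollary \ref{1} without writing out the details, and your argument (decomposing via (\ref{gb}), covering each $B(g.x,r)$ by $N\le c(r+1)^d$ unit balls, and bounding each unit-ball integral through the inclusion $B(z,1)\subset\{|z^+-y^+|\le 1\}$) is precisely the omitted verification.
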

 \par The Kato class can be characterized by means of the heat kernel $K_t$,
\begin{thm}\label{th1}
      If $ V\in \mathbb{K}_d$  then
     $$\lim_{t\downarrow 0}\left\|\int_0^te^{-sA_k}|V|ds\right\|_\infty= \lim_{t\downarrow 0}\left[\sup_{x}\int_0^t\int_{ \mathbb{R}^d} K_s(x,y) |V(y)| d\mu_k(y)ds \right]=0.$$
\end{thm}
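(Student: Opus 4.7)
The plan is to reduce the desired vanishing to the classical Simon characterization of the Kato class by combining the Gaussian upper bound for the Dunkl heat kernel with the identity $|x^+-y^+|=\min_{g\in G}|g.x-y|$. In effect, I want to dispose of the Dunkl weight inside the $y$-integral and then cover the Dunkl distance by $|G|$ Euclidean distances, so the classical theory takes over.

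First, I would apply the Gaussian upper bound (\ref{dhhk}): since $w_k(y)/\max(w_k(x),w_k(y))\le 1$,
$$K_s(x,y)\,w_k(y)\le C\,s^{-d/2}\,e^{-c|x^+-y^+|^2/s}.$$
Integrating against $|V(y)|\,dy$ yields
$$\int_0^t\!\int_{\mathbb{R}^d}K_s(x,y)|V(y)|\,d\mu_k(y)\,ds \le C\int_0^t s^{-d/2}\!\int_{\mathbb{R}^d}e^{-c|x^+-y^+|^2/s}|V(y)|\,dy\,ds.$$
The key outcome of this step is that the Dunkl weight $w_k$ has been entirely absorbed, leaving a purely Euclidean integral in $y$.

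Second, I would use $|x^+-y^+|=\min_{g\in G}|g.x-y|$ to dominate
$$e^{-c|x^+-y^+|^2/s}\le \sum_{g\in G}e^{-c|g.x-y|^2/s}.$$
Taking the supremum over $x\in\mathbb{R}^d$ and making the change of variable $x\mapsto g^{-1}.x$ inside each term of the sum (which leaves the supremum invariant since $G$ is a finite group acting isometrically) gives
$$\sup_x\int_0^t s^{-d/2}\!\int_{\mathbb{R}^d}e^{-c|x^+-y^+|^2/s}|V(y)|\,dy\,ds\le |G|\sup_{x'}\int_0^t s^{-d/2}\!\int_{\mathbb{R}^d}e^{-c|x'-y|^2/s}|V(y)|\,dy\,ds.$$

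Third, I would invoke Simon's classical characterization of $\mathbb{K}_d$ in terms of the Euclidean heat semigroup: for $V\in\mathbb{K}_d$,
$$\lim_{t\downarrow 0}\sup_{x'\in\mathbb{R}^d}\int_0^t(4\pi s)^{-d/2}\!\int_{\mathbb{R}^d}e^{-|x'-y|^2/(4s)}|V(y)|\,dy\,ds=0,$$
see \cite{Sim1}. A trivial rescaling of $s$ turns the constant $1/4$ into any fixed $c>0$, so the right-hand side above tends to $0$ as $t\downarrow 0$. Since $|G|$ is a fixed finite number, the first supremum tends to $0$ as well, which is exactly the desired conclusion.

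The only real obstacle is the first step: verifying cleanly that the bound (\ref{dhhk}) produces the Euclidean-type majorant $Cs^{-d/2}e^{-c|x^+-y^+|^2/s}$ for the product $K_s(x,y)w_k(y)$. Once this is in hand, all that remains is the finite-group averaging over $G$ and an appeal to the known Simon result for the classical Kato class, with no further Dunkl-specific subtleties.
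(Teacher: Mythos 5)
Your argument is correct, but it takes a genuinely different route from the paper. You reduce everything to the classical Euclidean statement: the Gaussian bound $K_s(x,y)\le C s^{-d/2}e^{-c|x^+-y^+|^2/s}/\max(w_k(x),w_k(y))$ absorbs the weight $w_k(y)$ from $d\mu_k(y)$, the identity $|x^+-y^+|=\min_{g\in G}|g.x-y|$ lets you dominate the Dunkl Gaussian by a sum of $|G|$ Euclidean Gaussians, and after pulling the supremum through the finite sum and rescaling $s$ you land exactly on Simon's heat-semigroup characterization of $\mathbb{K}_d$, which you quote as a black box. The paper instead gives a self-contained proof modeled on the classical one: it introduces $Q(x,y,t)=\int_0^tK_s(x,y)\,ds$, splits the $y$-integral at a $t$-dependent radius $\beta$, controls the far region via the growth estimate of Corollary \ref{15}, controls the near region via the reflection-group reformulation of the Kato condition (Proposition \ref{prop1}) together with the bounds $Q\lesssim |x^+-y^+|^{2-d}/w_k(y)$ for $d\ge3$ and a logarithmic estimate for $d=2$, and treats $d=1$ separately. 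Your route is shorter and avoids the dimension-by-dimension case analysis, at the cost of relying on the precise form of the classical equivalence (Kato class $\Leftrightarrow$ vanishing of $\sup_x\int_0^t e^{s\Delta}|V|\,ds$); you should make sure that exact statement is available in the cited reference \cite{Sim1} (it is Simon's Theorem A.2.1 in his Schr\"odinger semigroups survey), or note that Proposition \ref{prop1} of the paper plays the role your group-averaging step plays for you. Mathematically there is no gap.
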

\begin{proof}
   Put
  $$Q(x,y,t)=\int_0^t K_s(x,y)ds$$
  For $d\geq 3$, in view of  (\ref{dhk}), we obtain that
  \begin{equation}\label{in3}
    Q(x,y,t)\leq \int_0^{+\infty } K_s(x,y)ds\leq C\;  \frac{|x^+-y^+|^{2-d}}{w_k(y)}.
  \end{equation}
   In addition for $d\geq 1$ and as  the function
  $$s\rightarrow \frac{ 1}{s^{  d/2} }e^{\frac{-c|x^+-y^+|^2}{s}}  .$$
  is increasing for $s < \frac{2c}{d}\;|x^+-y^+|^2$, then when  $t < \frac{2c}{d}\;|x^+-y^+|^2$
  \begin{equation}\label{q2}
  Q(x,y,t)\leq  \frac{ C}{t^{  d/2-1} w_k(y)}e^{\frac{-c|x^+-y^+|^2}{t}}.
  \end{equation}
  Now  suppose that $d\geq 2$.  For  $0<t< c/d$  we let
   $   \beta= \left(\frac{dt}{2c} \right)^{1/2d}$. This clearly imply  that
   \begin{equation}\label{14}
    t< \frac{2c}{d}\;\beta^2.
   \end{equation}
   Write
  \begin{eqnarray*}
  \int_{\mathbb{R}^d}Q(x,y,t)|V(y)|d\mu_k(y)&\leq& \int_{ |x^+-y^+|>\beta}Q(x,y,t)|V(y)|d\mu_k(y)\\&&+ \int_{ |x^+-y^+|\leq \beta}Q(x,y,t)|V(y)|d\mu_k(y)
\end{eqnarray*}
In view of (\ref{14}) and (\ref{q2}), we have
\begin{eqnarray*}
  \int_{ |x^+-y^+|>\beta}Q(x,y,t)|V(y)|d\mu_k(y)&\leq& C\; t \int_{ |x^+-y^+|>\beta} t^{ - d/2}  e^{\frac{-c|x^+-y^+|^2}{t}}|V(y)| dy
   \\ &\leq& C t  \sum_{n\geq 1} \int_{n\beta \leq  |x^+-y^+|\leq (n+1)\beta}t^{ - d/2}  e^{\frac{-c| x^+-y^+|^2}{t}}|V(y)| dy
\\ &\leq& C t  \sum_{n\geq 1}t^{ - d/2}  e^{\frac{-cn^2\beta^2}{t}}\int_{ |x^+-y^+|\leq (n+1)\beta}|V(y)| dy
\end{eqnarray*}
 By using corollary  \ref{15} and (\ref{14})  we obtain
\begin{eqnarray*}
  \int_{ |x^+-y^+|>\beta}Q(x,y,t)|V(y)| \mu_k(y)&\leq& C\; t \sum_{n\geq 1}t^{ - d/2}  e^{\frac{-cn^2\beta^2}{2t}}e^{\frac{-cn^2\beta^2}{2t}}n^{d}
 \\&\leq& C\; \frac{t}{\beta^d} \sum_{n\geq 1}\left(\frac{n^{2}\beta^2}{t}\right)^{\frac{d}{2}}  e^{\frac{-cn^2\beta^2}{2t}}e^{\frac{-cn^2\alpha^2}{2t}}
\\&\leq& C\; \dfrac{t}{\beta^{d}} \sum_{n\geq 1}   e^{\frac{-cn^2\beta^2}{2t}}
\\&\leq& C\; \dfrac{t}{\beta^{d}} \sum_{n\geq 1}   e^{-dn^2/4}
\\&\leq& C\; \dfrac{t}{\beta^{d }} \leq C \sqrt{t}
\end{eqnarray*}
Therefore
$$\lim_{t\downarrow0}\sup_x\int_{ |x^+-y^+|>\beta}Q(x,y,t)|V(y)|d\mu_k(y)=0.$$
On the other hand, when  $ d\geq 3 $ we have by using (\ref{in3}) and  Proposition \ref{prop1}
 $$\lim_{t\downarrow0}\sup_x\int_{ |x^+-y^+| \leq\beta}Q(x,y,t)|V(y)|d\mu_k(y)=0.$$
 In the case  $d=2$
 \begin{eqnarray*}
  \int_{ |x^+-y^+|\leq \beta}Q(x,y,t)|V(y)|d\mu_k(y) &=& \int_{ |x^+-y^+|\leq \beta} \int_{0}^{t} e^{-s} e^{s} K_s(x,y) |V(y)|d\mu_k(y) ds
 \\&\leq& e^{t} \int_{ |x^+-y^+|\leq \beta} \int_{0}^{\infty} e^{-s} K_s(x,y) |V(y)|d\mu_k(y) ds
 \\&\leq & C e^{t} \int_{ |x^+-y^+|\leq \beta}  \int_{0}^{\infty}  \frac{  e^{-s}}{s }\; e^{\frac{-c|x^+-y^+|^2}{s}}ds |V(y)|dy.
   \end{eqnarray*}
   We write
   $$ \int_{0}^{\infty}  \frac{  e^{-s}}{s }\; e^{\frac{-c|x^+-y^+|^2}{s}}ds= \int_{0}^{1}  \frac{  e^{-s}}{s }\; e^{\frac{-c|x^+-y^+|^2}{s}}ds
    +\int_{1}^{\infty}  \frac{  e^{-s}}{s }\; e^{\frac{-c|x^+-y^+|^2}{s}}ds.$$
   For the second integral we have
    $$ \int_{1}^{\infty}  \frac{  e^{-s}}{s }\; e^{\frac{-c|x^+-y^+|^2}{s}}ds\leq  \int_{1}^{\infty}  \frac{  e^{-s}}{s }\;ds\leq - C\ln |x^+-y^+|, $$
    since $|x^+-y^+|\leq (1/2)^{1/2d}$.
   While for the first integral we write
   $$\int_{0}^{1}  \frac{  e^{-s}}{s }\; e^{\frac{-c|x^+-y^+|^2}{s}}ds=\int_{0}^{ |x^+-y^+|^2}  \frac{  e^{-s}}{s }\; e^{\frac{-c|x^+-y^+|^2}{s}}ds+\int_{|x^+-y^+|^2}^{1}  \frac{  e^{-s}}{s }\; e^{\frac{-c|x^+-y^+|^2}{s}}ds$$
   We have
   \begin{eqnarray*}
 \int_{0}^{ |x^+-y^+|^2}  \frac{  e^{-s}}{s }\; e^{\frac{-c|x^+-y^+|^2}{s}}ds&\leq& \int_{0}^{ |x^+-y^+|^2}  \frac{  1}{s }\; e^{\frac{-c|x^+-y^+|^2}{s}}ds=\int_{0}^{ 1}  \frac{ 1 }{s }\; e^{\frac{-c }{s}}ds
 \\&\leq& - C\ln |x^+-y^+|
  \end{eqnarray*}
    and
    \begin{eqnarray*}
     \int_{|x^+-y^+|^2}^{1}  \frac{  e^{-s}}{s }\; e^{\frac{-c|x^+-y^+|^2}{s}}ds&\leq& \int_{|x^+-y^+|^2}^{1}  \frac{  e^{-s}}{s }\;
   \\&\leq& \int_{0}^{1}  \frac{  e^{-s}-1}{s } ds+\int_{|x^+-y^+|}^{1}  \frac{  1}{s } ds \leq  - C\ln |x^+-y^+|
\end{eqnarray*}
      It then follows from the proposition  \ref{prop1} that
    $$\lim_{t\downarrow0}\sup_x\int_{ |x^+-y^+|\leq \beta}Q(x,y,t)|V(y)|d\mu_k(y)=0.$$
  \par For $ d= 1 $,
  \begin{eqnarray*}
       \int_{ |x^+-y^+| >1 }Q(x,y,t)|V(y)|d\mu_k(y) &\leq&  C \int_0^t \frac{ds}{\sqrt{s}} \int_{ |x^+-y^+|> 1}e^{ -c|x^+-y^+|^2 }  |V(y)|dy
       \\&\leq & C \sqrt{t}\sum_{n=1}^\infty ne^{-cn^2}\leq C\sqrt{t}
 \end{eqnarray*}

$$
  \int_{ |x^+-y^+|\leq 1}Q(x,y,t)|V(y)| \mu_k(y) \leq  C \int_0^t \frac{ds}{\sqrt{s}} \int_{ |x^+-y^+|\leq 1}  |V(y)|dy
  \leq  C \sqrt{t}$$
 This completes the proof.
\end{proof}
\begin{cor}
  If $V\in \mathbb{K}_d  $ then
  $$\lim_{a\rightarrow\infty}\|(A_k+a)^{-1}|V|\|_{\infty}=0$$
  \begin{proof}
For $t>0$, we write
\begin{eqnarray*}
  ((A_k+a)^{-1}|V|)(x)&=&\int_0^\infty e^{-sa}e^{-sA_k}(|V|)(x)ds \\
  &=& \sum_{n\geq 0} \int_{nt}^{(n+1)t}e^{-sa}e^{-sA_k}(|V|)(x)ds
  \\&=&\sum_{n\geq 0}e^{-nat} \int_{0}^{t}e^{-sa}e^{-(s+nt)A_k}(|V|)(x)ds.
 \end{eqnarray*}
  Using the integral representation of $ e^{-ntA_k}$, we obtain
  \begin{eqnarray*}
  ((A_k+a)^{-1}|V|)(x)
  &=&\sum_{n\geq 0}e^{-nat} \int_{\mathbb{R}^d}K_{nt}(x,y)\int_{0}^{t}e^{-sa}e^{- sA_k}(|V|)(y)ds dy
  \\&\leq &\sum_{n\geq 0}e^{-nat}  \left\|\int_{0}^{t} e^{- sA_k}(|V|)(y)ds \right\|_\infty\int_{\mathbb{R}^d}K_{nt}(x,y)dy
  \end{eqnarray*}
  Since
  $$\int_{\mathbb{R}^d}K_{nt}(x,y)dy=1$$
  Thus we get
 $$\|(A_k+a)^{-1}|V|\|_\infty \leq \frac{1}{1-e^{-at}}\left\|\int_{0}^{t} e^{- sA_k}(|V|) ds \right\|_\infty$$
  and the corollary follows from Theorem \ref{th1},  by  letting $t=1/a \rightarrow0$.
  \end{proof}
\end{cor}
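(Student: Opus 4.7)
The plan is to convert the resolvent into an integral of the semigroup via the Laplace formula
$$(A_k+a)^{-1}|V|(x) = \int_0^\infty e^{-sa}\,e^{-sA_k}|V|(x)\,ds,$$
and then to reduce the $L^\infty$ bound to the quantity $\bigl\|\int_0^t e^{-sA_k}|V|\,ds\bigr\|_\infty$, whose vanishing as $t\downarrow 0$ has just been established in Theorem \ref{th1}.

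Concretely, I would partition $[0,\infty)$ into intervals $[nt,(n+1)t)$ for a parameter $t>0$ to be chosen. The substitution $s = nt+u$ together with the semigroup law gives
$$(A_k+a)^{-1}|V|(x) = \sum_{n\geq 0} e^{-nat}\int_0^t e^{-ua}\,e^{-ntA_k}\bigl(e^{-uA_k}|V|\bigr)(x)\,du.$$
Since $e^{-ua}\leq 1$ and $e^{-ntA_k}$ is a positive integral operator of total mass one, and therefore an $L^\infty$-contraction, I would interchange the $u$-integral with $e^{-ntA_k}$ and bound pointwise by $\bigl\|\int_0^t e^{-uA_k}|V|\,du\bigr\|_\infty$. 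Summing the geometric series in $n$ then yields the key inequality
$$\bigl\|(A_k+a)^{-1}|V|\bigr\|_\infty \leq \frac{1}{1-e^{-at}}\,\Bigl\|\int_0^t e^{-sA_k}|V|\,ds\Bigr\|_\infty.$$

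To finish, I would couple $t$ to $a$ by choosing $t = 1/a$, which renders the prefactor $1/(1-e^{-1})$ a fixed constant; as $a\to\infty$ the norm on the right then vanishes by Theorem \ref{th1}. The one subtle point is the conservativeness $\int K_t(x,y)\,d\mu_k(y) = 1$ used to control $e^{-ntA_k}$ in $L^\infty$; this follows from $K_t(x,y) = \tau_x(k_t)(-y)$ together with the fact that $k_t$ is a radial probability density whose Dunkl transform equals $e^{-t|\xi|^2}$ and in particular takes value $1$ at the origin. Beyond this observation and Theorem \ref{th1}, no further analytic input enters the argument, so I do not expect any substantive obstacle.
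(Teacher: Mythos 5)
Your proposal is correct and follows essentially the same route as the paper: the Laplace-transform representation of the resolvent, the partition of $[0,\infty)$ into blocks of length $t$ combined with the semigroup law, the $L^\infty$-contractivity of $e^{-ntA_k}$ via $\int_{\mathbb{R}^d}K_{nt}(x,y)\,d\mu_k(y)=1$, the geometric series giving the factor $(1-e^{-at})^{-1}$, and the coupling $t=1/a$ together with Theorem \ref{th1}. No substantive difference to report.
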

  Similarly to the usual case ( see e.g. \cite[Prop. 3.35]{LH}, one can  deduce  that the operator   $V$ (  as a multiplication operator )is $A_k$-form bounded with relative form bound $0$.
Then from the   the well known K.L.M.N. theorem, (see \cite[Th X.17]{Sim2}) the  Schrodinger operators  $ A_k+V$ is well defined and self-adjoint as a form sum.
\begin{thm}
 If $V\in  \mathbb{K}_d$, and $t > 0$, then $W_t=e^{-tL_k}$ is a bounded operator
  from $L^{p}(\mathbb{R}^d,d\mu_k)$ to  $L^{q}(\mathbb{R}^d,d\mu_k)$ for all $1\leq p\leq q\leq \infty.$
  \end{thm}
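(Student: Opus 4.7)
The plan is to establish the two endpoint estimates $W_t\colon L^\infty\to L^\infty$ and $W_t\colon L^1\to L^\infty$ for every fixed $t>0$, and then conclude by self-adjoint duality and Riesz--Thorin interpolation. The essential inputs are the Kato-class smallness supplied by Theorem \ref{th1} together with the Gaussian upper bounds (\ref{dhk}) and (\ref{dhhhk}) for the unperturbed Dunkl heat kernel $K_t$.

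First I would prove the $L^\infty\to L^\infty$ bound by a Khasminskii-type argument based on the Dyson--Phillips (iterated Duhamel) expansion
\[
e^{-tL_k}=\sum_{n=0}^\infty(-1)^n\int_{0<s_1<\cdots<s_n<t} e^{-(t-s_n)A_k}\,V\,e^{-(s_n-s_{n-1})A_k}\,V\cdots V\,e^{-s_1A_k}\,ds_1\cdots ds_n.
\]
Using the stochastic-completeness identity $\int_{\mathbb{R}^d}K_s(x,z)\,d\mu_k(z)=1$, the sup-norm of the $n$-th term reduces to controlling iterated integrals of $e^{-(s_{j+1}-s_j)A_k}(|V|)$. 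By Theorem \ref{th1} one may fix $t_0>0$ so that $\alpha:=\sup_x\int_0^{t_0}e^{-sA_k}(|V|)(x)\,ds\leq 1/2$; a standard Khasminskii iteration then yields a geometric series in $\alpha$ on each subinterval of length $t_0$, producing $\|W_t\|_{\infty\to\infty}\leq Me^{\beta t}$ with constants $M,\beta$ depending on $V$ but not on $t$.

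Next, I would transport the same expansion to the kernel level and combine it with (\ref{dhk}) and Theorem \ref{th1} to obtain a pointwise Gaussian majorant of the form
\[
0\leq W_t(x,y)\leq C_t\,K_t(x,y), \qquad x,y\in\mathbb{R}^d,\; t>0,
\]
which via (\ref{dhhhk}) immediately gives $\|W_t\|_{1\to\infty}\leq C_t\,t^{-d/2-\gamma_k}<\infty$. Because $L_k$ is self-adjoint on $L^2(\mathbb{R}^d,d\mu_k)$, the $L^\infty$-bound from the first step dualises to $\|W_t\|_{1\to 1}\leq Me^{\beta t}$, and Riesz--Thorin interpolation between the endpoints $L^1\to L^1$, $L^\infty\to L^\infty$ and $L^1\to L^\infty$ then yields the claimed $L^p\to L^q$ boundedness for every $1\leq p\leq q\leq\infty$.

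The main obstacle is the passage from a localised-in-time kernel bound to arbitrary $t>0$: the Khasminskii iteration naturally produces $|W_t(x,y)|\leq C\,K_t(x,y)$ only on a time interval $(0,t_0]$ determined by the Kato smallness, and extending it to all $t>0$ requires iterating via the semigroup identity $W_t=W_{t_0}^{\,m}\,W_{t-mt_0}$ while retaining the correct Gaussian profile rather than losing it to a bare operator-norm bound. Here the Dunkl heat-kernel estimate (\ref{dhk}) is used essentially, because it respects the Weyl-orbit distance $|x^+-y^+|$, so the Gaussian factors compose under Dunkl translation with only a controlled loss.
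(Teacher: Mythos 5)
Your proposal follows exactly the route the paper intends: the paper gives no details at all, stating only that the proof is ``almost the same as'' that of Theorem 2.1 in \cite{Sim1} and consists of establishing the $L^\infty\to L^\infty$, $L^1\to L^1$ and $L^1\to L^\infty$ endpoint bounds (using the Kato-class characterisation of Theorem \ref{th1} and the Gaussian bounds on $K_t$) and then interpolating by Riesz--Thorin. Your Dyson--Phillips/Khasminskii treatment of the endpoints, duality for $L^1\to L^1$, and interpolation is precisely the content of that cited argument, so the two approaches coincide.
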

 It appears that the proof of this theorem is almost the same as the proof of the Theorem 2.1 in  \cite{Sim1}.  It consists,
 by making use of the Riesz-Thorin theorem,   to
prove that $W_t$ is bounded from $L^\infty(\mathbb{R}^d,d\mu_k)$ to  $L^\infty(\mathbb{R}^d,d\mu_k)$,   $L^1(\mathbb{R}^d,d\mu_k)$ to  $L^1(\mathbb{R}^d,d\mu_k)$ and   $L^1(\mathbb{R}^d,d\mu_k)$ to  $L^\infty(\mathbb{R}^d,d\mu_k)$.  The details will be omitted.

  \end{document}